% Modif. April 30, 2010
% Send comments to publ@impan.pl

\documentclass[12pt, twoside]{article}
\usepackage{amsmath,amsthm,amssymb}
\usepackage{times}
\usepackage{enumerate}
\usepackage{mathrsfs}
\usepackage{amsfonts}
\usepackage{amssymb}
\usepackage{amssymb}
\usepackage{amssymb}
\usepackage{amssymb}
\usepackage{amssymb}
\usepackage{amssymb}
\usepackage{amssymb}
\usepackage{amssymb}
\usepackage{amsmath}
\usepackage{mathdots}
\usepackage[all]{xy}

\pagestyle{myheadings}
\def\titlerunning#1{\gdef\titrun{#1}}
\makeatletter
\def\author#1{\gdef\autrun{\def\and{\unskip, }#1}\gdef\@author{#1}}
\def\address#1{{\def\and{\\\hspace*{18pt}}\renewcommand{\thefootnote}{}%
\footnote {#1}}%
\markboth{\autrun}{\titrun}}
\makeatother
\def\email#1{e-mail: #1}
\def\subjclass#1{{\renewcommand{\thefootnote}{}%
\footnote{\emph{Mathematics Subject Classification (2010):} #1}}}
\def\keywords#1{\par\medskip
\noindent\textbf{Keywords.} #1}

% newcommand bb

    %newcommand textbf

% newcommand  scr

% newcommand cal

    \newcommand{\CO}{{\mathcal {O}}} 
     
    \newcommand{\CS}{{\mathcal {S}}} 
     
    \newcommand{\CW}{{\mathcal {W}}}

    % newcommand frak

 % newcommand :rm

    \newcommand{\lenth}{{\mathrm {\lenth}}}

%% Numbered objects of "theorem" style (text italicized).
%% The optional parameters indicate that all objects are numbered together, and "by section".
%% However, you are welcome to use any other numbering system of your choice.

\newtheorem{thm}{Theorem}[section]

\newtheorem{lem}[thm]{Lemma}

\newtheorem{proposition}[thm]{Proposition}

%% A numbered theorem with a fancy name:

%% Numbered objects of "non-theorem" style (text roman):

\theoremstyle{definition}
\newtheorem{defin}[thm]{Definition}

%% An unnumbered remark:

\newtheorem*{xrem}{Remark}

%% Equations numbered by section:

\numberwithin{equation}{section}

%%%%%%%%%%% For JEMS
\frenchspacing

\textwidth=15cm
\textheight=23cm
\parindent=16pt
\oddsidemargin=-0.5cm
\evensidemargin=-0.5cm
\topmargin=-0.5cm

%%%%%%%%%%%%%%%%%%%%%%%%%%%%%%%%%%%
%%%%%%%%%%%%%%%%%%%%%%%%%%%%%%%%%%%

%%%% Put your macros here:

%%%%%%%%%%%%%

\begin{document}

%%%%% To ease editing, add:

\baselineskip=17pt

%%%%%%%%%%%%%%%%

%% In the running head, give an abbreviation of the title.
\titlerunning{Bessel functions and local converse conjecture of Jacquet}

\title{Bessel functions and local converse conjecture of Jacquet}

\author{Jingsong Chai}

\date{}

\maketitle

\address{Jingsong Chai: College of Mathematics and Econometrics, Hunan University, Changsha, 410082,
China; \email{chaijingsong@hnu.edu.cn}}

\subjclass{Primary 11F70; Secondary 22E50}

%%%%%%%%

\begin{abstract}
In this paper, we prove the local converse conjecture of Jacquet
over $p$-adic fields for $GL_n$ using Bessel functions.

%% Keywords are optional
\keywords{Bessel functions, Howe vectors, local converse conjecture
of Jacquet}
\end{abstract}

\section{Introduction}
Let $F$ be a p-adic field, and $\psi$ be a nontrivial additive
character of $F$ with conductor precisely $\CO$, the ring of
integers of $F$. Let $\pi$ be an irreducible admissible generic
representation of $GL_n(F)$. If $\rho$ is an irreducible admissible
generic representation of $GL_r(F)$, one can attach an important
invariant local gamma factor $\gamma(s,\pi\times \rho,\psi)$ via the
theory of  local Rankin-Selberg integrals by Jacquet,
Piatetski-Shapiro and Shalika (\cite{JPSS}). This invariant can also
be defined by Langlands-Shahidi method (\cite{Sha}).

These invariants can be used to determine the representation $\pi$
up to isomorphism. In \cite{Hen}, Henniart proved that, for
irreducible admissible generic representations $\pi_1$ and $\pi_2$
of $GL_n(F)$, if the family of invariants $\gamma(s,\pi_1\times
\rho,\psi)=\gamma(s,\pi_2\times \rho, \psi)$, for all $r$ with
$1\leq r\leq n-1$, and for all irreducible admissible generic
representations $\rho$ of $GL_r(F)$, then $\pi_1\cong \pi_2$. This
is then strengthened by J.Chen (\cite{Ch}) and Cogdell and
Piatetski-Shapiro (\cite{CPS}) by decreasing $r$ from $n-1$ to
$n-2$. Such type of results together with global version first
appeared in \cite{JaL} for $GL(2)$ and \cite{JPSS1979} for $GL(3)$.
A general conjecture, which is due to Jacquet can be formulated as
follows.

\textbf{Conjecture 1 (Jacquet).} Assume $n\ge 2$. Let $\pi_1$ and
$\pi_2$ be irreducible generic smooth representations of $GL_n(F)$.
Suppose for any integer $r$, with $1\leq r \leq [\frac{n}{2}]$, and
any irreducible generic smooth representation $\rho$ of $G_r$, we
have

$$\gamma(s, \pi_1\times \rho, \psi )= \gamma(s, \pi_2\times \rho,
\psi ),$$ then $\pi_1$ and $\pi_2$ are isomorphic.

In the present paper, we will prove \textbf{Conjecture 1} using
Bessel functions. The author was recently informed that this
conjecture has also been proved by H.Jacquet and Baiying Liu
independently using a different method, see \cite{JL}.

By the work of Dihua Jiang, Chufeng Nien and Shaun Stevens in
section 2.4, \cite{JNS}, this conjecture has been reduced to the
following conjecture when both $\pi_1,\pi_2$ are unitarizable
irreducible supercuspidal representations.

\textbf{Conjecture 2.} Assume $n\ge 2$. Let $\pi_1$ and $\pi_2$ be
irreducible unitarizable and supercuspidal smooth representations of
$GL_n(F)$. Suppose for any integer $r$, with $1\leq r \leq
[\frac{n}{2}]$, and any irreducible generic smooth representation
$\rho$ of $G_r$, we have

$$\gamma(s, \pi_1\times \rho, \psi )= \gamma(s, \pi_2\times \rho,
\psi ),$$ then $\pi_1$ and $\pi_2$ are isomorphic.

It is this conjecture that we will prove in this paper. The main
result can be stated as follows.

\begin{thm}
\textbf{Conjecture 2} is true, and so is \textbf{Conjecture 1}.
\end{thm}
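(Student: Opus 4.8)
The plan is to build on the reduction recalled above: by \cite{JNS} it suffices to prove \textbf{Conjecture 2}, so let $\pi_1,\pi_2$ be irreducible unitarizable supercuspidal representations of $GL_n(F)$ whose twisted local gamma factors agree, i.e.\ $\gamma(s,\pi_1\times\rho,\psi)=\gamma(s,\pi_2\times\rho,\psi)$ for every $r$ with $1\le r\le[\frac n2]$ and every irreducible generic smooth $\rho$ of $GL_r(F)$. Fix Whittaker models $\CW(\pi_i,\psi)$ relative to the standard maximal unipotent $N_n$. The target is the equality of Bessel functions $B_{\pi_1,\psi}=B_{\pi_2,\psi}$ on $GL_n(F)$: since for a supercuspidal $\pi$ the Bessel function determines $\CW(\pi,\psi)$, and hence $\pi$ up to isomorphism, this equality finishes the proof of \textbf{Conjecture 2}, and with it \textbf{Conjecture 1}.

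First I would introduce the Howe vectors $v_m$ in $\pi_i$ for $m\ge 1$, with associated Whittaker functions $W_m\in\CW(\pi_i,\psi)$, and record their standard properties: $W_m(e)=1$, controlled transformation under the congruence subgroups $K_m$, and — the decisive point — that as $m\to\infty$ the function $g\mapsto W_m(g)$ restricted to the big Bruhat cell stabilizes to $B_{\pi_i,\psi}$, while on any fixed compact set it already agrees with the partial Bessel function truncated at level $m$. This localizes the analytic objects to be manipulated, so that assertions about $B_{\pi_i,\psi}$ follow from assertions about the $W_m$.

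The core step is to feed the Howe vectors into the local Rankin--Selberg integrals $\Psi(s,W_m,W')$ of \cite{JPSS} with $W'\in\CW(\rho,\psi^{-1})$, $\rho$ of rank $r\le[\frac n2]$, and to exploit the functional equation
\[
\Psi\bigl(1-s,\widetilde{W_m},\widetilde{W'}\bigr)=\gamma(s,\pi_i\times\rho,\psi)\,\Psi(s,W_m,W')
\]
(in the appropriate JPSS form) together with the hypothesis on gamma factors. Choosing $W'$ supported near suitable representatives, these integrals compute partial Bessel functions of $\pi_i$ at block-permutation elements assembled from $r\times r$ and $(n-r)\times(n-r)$ blocks, so equality of the two gamma factors forces $B_{\pi_1,\psi}$ and $B_{\pi_2,\psi}$ to agree at those elements; here the contragredients on the left of the functional equation are what let rank-$\le[\frac n2]$ twists also reach the "other half" of the Weyl group, via the self-duality of the Bessel function. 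One then runs an induction along the Bruhat order on Weyl elements: each integral detects a given cell modulo contributions of strictly larger cells, so agreement propagates from the bottom cell upward, and the bound $r\le[\frac n2]$ is exactly what is needed to realize every Weyl element this way through the mirabolic/block structure of the integral. Letting $m\to\infty$ then upgrades the cell-by-cell identities to $B_{\pi_1,\psi}=B_{\pi_2,\psi}$ on all of $GL_n(F)$.

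The main obstacle I anticipate is this inductive propagation: one must bound the lower-order terms in $\Psi(s,W_m,W')$ coming from cells other than the one being isolated, uniformly in $m$ so the passage to the limit $m\to\infty$ is legitimate, and one must verify that testing only against $\rho$ of rank at most $[\frac n2]$ still reaches the middle-dimensional cells where the block decomposition degenerates. This rests on a careful support analysis of the Howe vectors and on Iwasawa-decomposition bookkeeping reducing a general $g\in GL_n(F)$ to torus-times-Weyl-element form. Once $B_{\pi_1,\psi}=B_{\pi_2,\psi}$ is in hand, reconstructing $\CW(\pi_i,\psi)$ from its Bessel function gives $\pi_1\cong\pi_2$, which proves \textbf{Conjecture 2} and therefore \textbf{Conjecture 1}.
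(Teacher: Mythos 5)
Your proposal has the right raw materials — Howe vectors, Rankin--Selberg integrals plus their functional equations, Bessel functions determine a supercuspidal, the \cite{JNS} reduction — and you correctly identify the endpoint (equality of $j_{\pi_1}$ and $j_{\pi_2}$ implies $\pi_1\cong\pi_2$, which in the paper goes through the weak kernel formula, not a Bruhat-order induction). But the "core step" as you sketch it is not a proof, and it differs from the actual mechanism in two essential places. First, you propose choosing $W'$ "supported near suitable representatives" to isolate partial Bessel values, and then running "an induction along the Bruhat order on Weyl elements" with uniform control of lower-order cell contributions. The paper does neither: it keeps $\rho$ and $W'$ completely general and instead (i) builds a specific Rankin--Selberg integral on $GL_{2r+1}\times GL_r$ with an extra $r\times r$ matrix integration, (ii) establishes a precise inversion symmetry of Howe vectors on certain Bruhat cells (Propositions 3.4 and 3.5, the "special pair of Whittaker functions" property in weak form), which combined with Chen's Proposition 4.1 forces the integrand of the $\gamma$-twisted side to depend only on values of $\widetilde W^i$ on $N_n\alpha^r P_n$, where they are already known to agree, and then (iii) invokes the density Lemma 2.3 to conclude the inner functions on the other side of the functional equation coincide, yielding $W^1_{v_m}(a\omega_n)=W^2_{v_m}(a\omega_n)$. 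Without something like the symmetry of Proposition 3.5 and Chen's cell-matching result there is no reason your integral comparison produces a single partial Bessel value rather than an uncontrolled linear combination; this is exactly the obstacle you flag yourself, and it is not bridged in your sketch.

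Second, you treat all $n$ uniformly, but the argument just described only works for $n=2r+1$ odd — the block decomposition into an $r\times(r+1)$ shape with the specific $\alpha^{r+1}$-twist is built around that parity. The paper needs a separate Theorem 4.4 for even $n=2r$, reducing to the odd case by parabolically inducing $\pi_i\otimes\chi$ up to $GL_{2r+1}$, transferring equality of Howe vectors at $a\omega_{2r+1}$ down to $GL_{2r}$ (via derivatives, or local integrability of Bessel functions, or Shahidi's local-coefficient formula), and then invoking the Bernstein--Zelevinsky classification. This reduction is absent from your proposal. So while your plan is in the correct spirit and would likely converge to the paper's argument if pushed, as written it leaves both of the genuinely hard steps — cell isolation and the even/odd split — unaddressed.
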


A crucial ingredient in the proof is Bessel function, which has its
own interests. Given an irreducible admissible generic
representation $\pi$ of $GL_n(F)$, one can attach a Bessel function
$j_{\pi}$ to $\pi$. Such functions were first defined over p-adic
fields by D.Soudry in \cite{Soudry:1984} for $GL_2(F)$, and then
were generalized to $GL_n(F)$ by E.M.Baruch in \cite{Baruch:2005},
to other split groups by E.Lapid and Zhengyu Mao in
\cite{LapidMao:2013}, respectively. A general philosophy is that the
local gamma factors $\gamma(s,\pi\times \rho,\psi)$ are intimately
related to the Bessel functions of $\pi,\rho$. In many cases, we
know that local gamma factors can be expressed as certain Mellin
transform of Bessel functions, see for example
\cite{CPS98,Shahidi:2002,Soudry:1984}. This expression is the
starting point of proving stability of local gamma factors, which is
crucial in applying converse theorem to Langlands functoriality
problems. For the case at hand, such Mellin transform is also
expected but has not yet been proved according to the author's
knowledge. However, it is still possible to derive an equality of
Bessel functions from equalities of local gamma factors via local
Rankin-Selberg integrals. Then the above conjecture follows as the
Bessel function $j_\pi$ determines the representation $\pi$ up to
isomorphism by the weak kernel formula (Theorem 4.2 in \cite{cjs}).
This is the basic idea of our proof.

There are much progress made towards this conjecture in recent
years. In particular, Chufeng Nien in \cite{Nien} proved an analogue
of this conjecture in finite field case. Dihua Jiang, Chufeng Nien
and Shaun Stevens in \cite{JNS} formulated an approach using
constructions of supercuspidal representations to attack this
conjecture in p-adic case, and proved it in many cases, including
the cases when $\pi_1,\pi_2$ are supercuspidal representations of
depth zero. Later on based on this approach, Moshe Adrian, Baiying
Liu, Shaun Stenvens and Peng Xu in \cite{ALSX} proved the conjecture
for $GL_n(F)$ when $n$ is prime. There are also some work on similar
problems for other groups. See \cite{Ba1995, Ba1997, JS, Zh1, Zh2}
for examples. For a more comprehensive survey on local converse
problems and related results, see relevant sections in \cite{Jiang,
JN}.

%One important ingredient of the above work is the following (non-disjoint) decomposition.
%Let $\alpha=\begin{pmatrix} & I_{n-1} \\ 1 &   \end{pmatrix}$, then
%\[
%GL_n(F)= \cup_{ 0\le i\le [\frac{n}{2}], n-[\frac{n}{2}]\le j\le n } N_n\alpha^iP_n \alpha^j N_n
%\]
%where $P_n$ is the mirabolic subgroup and $N_n$ is the subgroup of
%upper triangular unipotent matrices. This decomposition was first
%used by Chufeng Nien in her work \cite{Nien}, and also plays an
%important role in \cite{JNS}. \\

An important ingredient of the approach suggested in \cite{JNS} is
to reduce the conjecture to show the existence of certain Whittaker
functions (called \textit{special pair of Whittaker functions}) for
a pair of unitarizable supercuspidal representations $\pi_1,\pi_2$
of $GL_n(F)$. In \cite{JNS} and \cite{ALSX}, such Whittaker
functions were found in many cases using the constructions of
supercuspidal representations.

We will explain our proof in more details, and the above ingredient
is also important. To prove \textbf{Conjecture 2}, as explained
above it suffices to show that, under the assumptions of the
conjecture, the unitarizable supercuspidal representations
$\pi_1,\pi_2$ have the same Bessel functions. By Proposition 5.3 in
\cite{cjs}, it is reduced to show that the normalized Howe vectors
(see Definition 3.1 below), which are certain partial Bessel
functions in the Whittaker models providing nice approximations to
Bessel functions, satisfy
$W^1_{v_m}(a\omega_n)=W^2_{v_m}(a\omega_n)$ for any diagonal matrix
$a$, where $\omega_n$ is the longest Weyl element. For this purpose,
when $n=2r+1$ is odd, we consider the following Rankin-Selberg
integrals on $GL_{2r+1}(F)\times GL_r(F)$ (for the unexplained
notations, see section 2 for details)
\[
\gamma(s,\pi^*_i\times \rho,\psi^{-1})\int_{N_r\backslash G_r}
\int_{M(r\times r)} \widetilde{W}^i_{\omega_n.v_m} \left(
\begin{pmatrix} g&&   \\ x&I_{r}& \\ &&1   \end{pmatrix}
\begin{pmatrix} \omega_{2r} & \\ & 1 \end{pmatrix} \alpha^{r+1}a \right)W'(g)|det(g)|^{s-\frac{r+1}{2}}dxdg
\]
\[
=\omega_{\rho}(-1)^{r-1}\int
\widetilde{\widetilde{W^i}}_{\omega_n.v_m}\left(
\begin{pmatrix} g& \\ &I_{r+1} \end{pmatrix}\omega_{n,r} \begin{pmatrix} \omega_{2r} & \\ & 1 \end{pmatrix}
\alpha^{r+1}a^{-1}
\right)\widetilde{W'}(g)|det(g)|^{1-s-\frac{r+1}{2}}dg,
\]
where $\rho$ is any generic irreducible smooth representation of
$GL_r(F)$.

By Lemma 2.3 below, it suffices to show
\[
\widetilde{W}^1_{\omega_n.v_m} \left(
\begin{pmatrix} g&&   \\ x&I_{r}& \\ &&1   \end{pmatrix}
\begin{pmatrix} \omega_{2r} & \\ & 1 \end{pmatrix} \alpha^{r+1}a \right)
=\widetilde{W}^2_{\omega_n.v_m} \left(
\begin{pmatrix} g&&   \\ x&I_{r}& \\ &&1   \end{pmatrix}
\begin{pmatrix} \omega_{2r} & \\ & 1 \end{pmatrix} \alpha^{r+1}a \right)
\]
on certain open dense subset of the domain in the integrals.
Inspired by the work of \cite{JNS}, we will first in section 3 show
that, the normalized Howe vectors satisfy certain properties similar
to \textit{special pair of Whittaker functions} in a slightly weak
form on certain Bruhat cells. Then combining with the work of Jeff
Chen in \cite{Ch}, these properties will imply the above identities,
which finishes the proof in the odd case. The even case can then be
deduced from the odd case.

We finally remark that Nien used Bessel functions in her's proof of
finite field analogue (\cite{Nien}), and in this paper we give the
first Bessel function proof of \textbf{Conjecture 1} over p-adic
fields.

The paper is organized as follows. In section 2, we recall some
backgrounds and preparations on Rankin-Selberg integrals and Bessel
functions. We then study Howe vectors in detail in section 3. In the
last section, we prove the theorem.

\section{Preparations}

Use $G_n$ to denote $GL_n(F)$, and embed $G_{n-1}$ into $G_n$ on the
left upper corner. Let $N_n$ be subgroup of the upper triangular
unipotent matrices. $A_n$ the subgroup of diagonal matrices. Use
$P_n$ to denote the mirabolic subgroup consisting of matrices with
the last row $(0,...,0,1)$. We extend the additive character $\psi$
to $N_n$, still denoted as $\psi$, by setting
\[
\psi(u)=\psi(\sum_{i=1}^{n-1}u_{i,i+1}) \ \ \ \ u=(u_{ij})\in N_n.
\]

If $\pi$ is an irreducible admissible generic representation of
$G_n$, use $\CW(\pi,\psi)$ to denote the Whittaker model of $\pi$
with respect to $\psi$. If $W\in \CW(\pi,\psi)$, define
\[
\widetilde{W}(g):=W(\omega_n\cdot {^tg^{-1}})
\]
where $\omega_n=\begin{pmatrix}  & &  1\\ &\iddots & \\ 1 & &
\end{pmatrix}$, then the space of functions
\[
\{\widetilde{W}(g): W\in \CW(\pi,\psi) \}
\]
is the Whittaker model of $\pi^*$ with respect to $\psi^{-1}$, where
$\pi^*$ denotes the contragredient of $\pi$.

Suppose $\pi$ and $\pi'$ are irreducible admissible generic
representations of $G_n$ and $G_r$ respectively, with associated
Whittaker models $\mathcal{W}(\pi,\psi)$ and $\mathcal{W}(\pi',
\psi^{-1})$. For our purpose, we will assume $r<n$. For any $W\in
\mathcal{W}(\pi,\psi)$, $W'\in \mathcal{W}(\pi', \psi^{-1})$, $s\in
\mathbb{C}$ a complex number, and any integer $j$ with $n-r-1\ge j
\ge 0$, set $k=n-r-1-j$, and let $$I(s, W, W',
j)=\int_{N_r\backslash G_r}\int_{M(j\times r)} W\begin{pmatrix} g &
0 & 0 \\ x & I_j & 0 \\ 0 & 0 & I_{k+1}
\end{pmatrix} W'(g)|det g|^{s-(n-r)/2} dxdg,$$
where $M(j\times r)$ denotes the space of matrices of size $j\times
r$.

We have the following basic result in the theory of local
Rankin-Selberg integrals.

\begin{thm}(\cite{JPSS}) For any pair of generic
irreducible admissible representations $\pi$ and $\pi'$ on $G_n$ and
$G_r$, we have:

(1). The integrals $I(s, W, W', j)$ converge absolutely for $Re(s)$
large;

(2). The integrals $I(s, W, W', j)$ span a fractional ideal in
$\mathbb{C}[q^s, q^{-s}]$ with a unique generator $L(s, \pi\times
\pi')$ such that $L(s, \pi\times \pi')$  has the form
$P(q^{-s})^{-1}$  for some polynomial $P\in \mathbb{C}[x]$ with
$P(0)=1$;

(3). There exists a meromorphic function $\gamma(s, \pi\times \pi',
\psi)$, independent of the choices of $W$, $W'$, such that
$$I(1-s,
{\pi^*}(\omega_{n,r})\widetilde{W}, \widetilde{W'}, k) =
\omega_{\pi'}(-1)^{n-1}\gamma(s, \pi \times \pi', \psi)I(s, W, W',
j),$$ where $n-r-1\ge j \ge 0$, $k=n-r-1-j$,
$\omega_{n,r}=\begin{pmatrix} I_{r} & \\ & \omega_{n-r}
\end{pmatrix}$ and $\omega_{\pi'}$ is the central character of
$\pi'$.

\end{thm}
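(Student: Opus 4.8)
The statement is the foundational local theory of Jacquet--Piatetski-Shapiro--Shalika, so I would sketch a proof along the lines of \cite{JPSS}. \emph{Convergence (1).} The plan is to reduce everything to the asymptotics of Whittaker functions on the diagonal torus. Writing $g\in G_r$ in Iwasawa form $g=nak$ and using that $W$ and $W'$ are fixed by a compact open subgroup (so only finitely many right translates by $k\in GL_r(\CO)$ contribute), one bounds $|I(s,W,W',j)|$ by a finite sum of integrals over $A_r\times M(j\times r)$. For the inner integral one observes that, with $a$ and $k$ fixed, the function $x\mapsto W(\,\cdot\,)$ of the $x$-block appearing in the definition of $I(s,W,W',j)$ is compactly supported, since conjugating the lower-unipotent $x$-block past $a$ and invoking smoothness forces $x$ into a lattice controlled by $a$. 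The key input is the \emph{gauge}: on $A_n$ one has $|W(a)|\le\Phi\cdot\sum_i|\chi_i(a)|$ for $a=\diag(a_1,\dots,a_n)$, where $\Phi$ is a Schwartz function of the last coordinates, the $\chi_i$ are the finitely many exponents coming from the Jacquet module of $\pi$, and $W$ vanishes unless $v(a_\ell/a_{\ell+1})$ is bounded below for the relevant indices $\ell$ (here $v$ is the valuation of $F$); likewise for $W'$ on $A_r$. Substituting, $|I(s,W,W',j)|$ is dominated by a finite sum of cone sums of the shape $\sum_{\mathbf m}(\text{polynomial in }\mathbf m)\,q^{-(\Re(s)+c)|\mathbf m|}$, which converges for $\Re(s)$ large.

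\emph{The fractional ideal (2).} On the region of convergence the same gauge expansion gives more: the restriction of a Whittaker function to $A_n$ is, up to the standard unipotent character, a \emph{finite function} of the coordinates $v(a_\ell/a_{\ell+1})$ (a finite sum of polynomials times unramified characters of these coordinates), so each geometric-type sum above is a rational function of $q^{-s}$; hence $I(s,W,W',j)\in\mathbb{C}(q^{-s})$. Next, translating $W$ on the right by $\diag(t,I_{n-r})$ with $t=\diag(\varpi^{a_1},\dots,\varpi^{a_r})$ and changing variables in $g$ and $x$ multiplies $I(s,W,W',j)$ by $q^{(\sum_i a_i)s}$ times a constant; as $\sum_i a_i$ runs over $\mathbb{Z}$ this shows the span of all $I(s,W,W',j)$ is a $\mathbb{C}[q^s,q^{-s}]$-submodule of $\mathbb{C}(q^{-s})$. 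Since $\mathbb{C}[q^s,q^{-s}]$ is a PID, the span is a principal fractional ideal. It is nonzero: one exhibits $W,W'$ (take $W'$ supported, modulo $N_r$, in a tiny neighbourhood of the identity and $W$ compatibly, so the $x$- and $g$-integrations collapse) for which $I(s,W,W',j)$ is a nonzero constant. That constant lies in the ideal, so the ideal contains $\mathbb{C}[q^s,q^{-s}]$; a generator $Y$ then satisfies $Y^{-1}\in\mathbb{C}[q^s,q^{-s}]$, so $Y=c\,q^{ms}P(q^{-s})^{-1}$ for a polynomial $P$, and after absorbing the unit $c\,q^{ms}$ the generator can be taken to be $L(s,\pi\times\pi')=P(q^{-s})^{-1}$ with $P(0)=1$.

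\emph{The functional equation (3).} The key is a uniqueness principle. Fix an admissible $j$ and put $k=n-r-1-j$. I would view the two sides, for varying $W,W'$, as $\mathbb{C}(q^{-s})$-valued bilinear forms on $\CW(\pi,\psi)\times\CW(\pi',\psi^{-1})$, namely $B_s\colon(W,W')\mapsto I(s,W,W',j)$ and $\widetilde B_s\colon(W,W')\mapsto I(1-s,\pi^*(\omega_{n,r})\widetilde W,\widetilde{W'},k)$, and check that both transform the same way under the mirabolic-type subgroup $R_{n,r}\subset G_n$ generated by the image of $G_r$ and the unipotent $x$-block: equivariance by $\psi$ on the unipotent part and by $|\det|^{s-(n-r)/2}$, resp.\ $|\det|^{(1-s)-(n-r)/2}$, on the $G_r$-factor. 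For $\widetilde B_s$ this uses that $g\mapsto\widetilde W(\,\cdot\,\omega_{n,r})$ converts the $R_{n,r}$-invariance of the $k$-integral into that of the $j$-integral, and the constant $\omega_{\pi'}(-1)^{n-1}$ falls out from tracking the central character of $\pi'$ and the Weyl element $\omega_n$ inside $\widetilde{(\,\cdot\,)}$. The main point is then that the space of such bilinear forms is at most one-dimensional over $\mathbb{C}(q^{-s})$; this is the analogue for $R_{n,r}$ of the uniqueness of the Whittaker functional, and I would prove it by Bernstein--Zelevinsky methods: restrict $\pi$ successively along the mirabolic subgroups $P_m$, use the Kirillov-model filtration together with Frobenius reciprocity, and check that all but one of the resulting orbit contributions vanish. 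Uniqueness then yields $I(1-s,\pi^*(\omega_{n,r})\widetilde W,\widetilde{W'},k)=\omega_{\pi'}(-1)^{n-1}\gamma(s,\pi\times\pi',\psi)\,I(s,W,W',j)$ with $\gamma$ independent of $W,W'$, and rational in $q^{-s}$ by part (2); and since the integrals for every admissible $j$ lie in this same one-dimensional space, $\gamma$ is independent of $j$ as well.

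\emph{The main obstacle.} The technical heart is the one-dimensionality of the space of $R_{n,r}$-equivariant bilinear forms (local uniqueness of the Rankin--Selberg model): this genuinely requires the Bernstein--Zelevinsky derivative/orbit machinery rather than any soft argument, and it is the step around which the whole functional equation turns. A close second is establishing the gauge and finite-function asymptotics of Whittaker functions on the torus, on which both the convergence in (1) and the rationality in (2) rest.
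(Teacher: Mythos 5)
This theorem is not proved in the paper at all: it is quoted verbatim from Jacquet--Piatetski-Shapiro--Shalika \cite{JPSS} as background, so there is no internal argument to compare against. Your sketch is a faithful outline of the standard proof in the cited source --- gauge/finite-function asymptotics of Whittaker functions on the torus for convergence and rationality, the $q^{\pm s}$-stability and PID argument for the fractional ideal, and multiplicity one for the $R_{n,r}$-equivariant bilinear forms (via Bernstein--Zelevinsky derivatives) for the functional equation --- and you correctly identify that last uniqueness statement as the genuine technical core rather than something that follows from soft equivariance considerations.
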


%\hspace{14cm}  $\Box$ \\

\begin{xrem}
The meromorphic function $\gamma(s, \pi\times \pi', \psi)$ is called
the \emph{local $\gamma$-factor} of $\pi$ and $\pi'$.
\end{xrem}

Now let $\pi$ be a generic irreducible unitarizable representation
of $G_n$. Consider the space of functions
\[
\overline{\CW}:=\{\overline{W}(g): W(g)\in \CW(\pi,\psi) \}
\]
where $'\bar \ \ '$ denotes the complex conjugate.

Then with the right translation by $G_n$, $\overline{\CW}$ is an
irreducible representation of $G_n$, with
$\overline{W}(ug)=\psi^{-1}(u)\overline{W}(g)$ for $u\in N_n,g\in
G_n$. Thus $\overline{\CW}$ is the Whittaker model with respect to
$\psi^{-1}$ of some generic irreducible unitarizable representation
$\tau$ of $G_n$.

Since $\pi$ is unitarizable, we have a $G_n$-invariant inner product
$(W_1,W_2)$ on some complex Hilbert space. Then for $\overline{W}\in
\overline{\CW}$, view it as a smooth linear functional on
$\CW(\pi,\psi)$ via the above form

%\[
%(W_1, W_2)=\int_{N_nZ_n\backslash G_n} W_1(g)\overline{W}_2(g)dg \ \
%\ \ \ \ (*)
%\]
%is a nonzero $G_n$-invariant inner product on $\CW(\pi,\psi)$ (see
%also Corollary 3.4 in (\cite{LM})).

\[
l_{\overline{W}}: W_v(g)\to (W_v, \overline{W}).
\]
Since $(*)$ is $G_n$-invariant, this gives an isomorphism between
the representation $\tau$ on $\overline{\CW}$ and $\pi^*$. We record
it as a proposition.

\begin{proposition}
\label{prop2} If the representation $\pi$ is generic irreducible
unitarizable with Whittaker model $\CW(\pi,\psi)$, then the
representation $(\tau,\overline{\CW})$ is a Whittaker model
of $\pi^*$ with respect to $\psi^{-1}$. \\
\end{proposition}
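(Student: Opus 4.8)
The plan is to turn the invariant inner product into an explicit $G_n$-equivariant isomorphism $\overline{\CW}\xrightarrow{\sim}\pi^*$ and then let irreducibility finish the job. First I would make the functional $l_{\overline{W}}$ of the preceding discussion precise as a bilinear pairing
\[
B\colon \CW(\pi,\psi)\times\overline{\CW}\lra \BC,\qquad B(W,\overline{W'}):=(W,W'),
\]
where $(\cdot,\cdot)$ denotes the $G_n$-invariant inner product on $\CW(\pi,\psi)$ supplied by unitarizability, taken to be conjugate-linear in the second argument. Since both $\overline{W'}\mapsto W'$ and $(W,\cdot)$ are conjugate-linear, $B$ is genuinely bilinear. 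Two elementary observations drive everything: $B$ is $G_n$-invariant, since $\tau(g)\overline{W'}=\overline{\pi(g)W'}$ gives
\[
B(\pi(g)W,\tau(g)\overline{W'})=(\pi(g)W,\pi(g)W')=(W,W')=B(W,\overline{W'}),
\]
and $B$ is nondegenerate in each variable because $(\cdot,\cdot)$ is positive definite.

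Next I would check that $B$ induces a map into the \emph{smooth} dual. Fix $\overline{W'}\in\overline{\CW}$ and an open compact subgroup $K\subset G_n$ fixing $W'$; invariance of $(\cdot,\cdot)$ gives $B(\pi(k)W,\overline{W'})=(\pi(k)W,W')=(W,\pi(k^{-1})W')=(W,W')$ for $k\in K$, so $W\mapsto B(W,\overline{W'})$ is $K$-fixed and hence a smooth vector in the full linear dual of $\CW(\pi,\psi)\cong\pi$, i.e.\ an element of $\pi^*$. Thus $\overline{W'}\mapsto B(\cdot,\overline{W'})$ defines a linear map $\Phi\colon\overline{\CW}\to\pi^*$, which the invariance identity above shows intertwines $\tau$ with the $G_n$-action on $\pi^*$, and which nondegeneracy in the second variable shows is injective.

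Finally I would close by irreducibility. The space $\overline{\CW}$ is irreducible admissible: complex conjugation is a conjugate-linear bijection $\CW(\pi,\psi)\to\overline{\CW}$ carrying $\pi$-invariant subspaces to $\tau$-invariant subspaces and preserving the dimensions of fixed subspaces under open compact subgroups. And $\pi^*$ is irreducible admissible because $\pi$ is. Hence the nonzero $G_n$-morphism $\Phi$ between them is an isomorphism $\tau\cong\pi^*$. Since every function in $\overline{\CW}$ is smooth on $G_n$ and transforms under $N_n$ on the left by $\psi^{-1}$, this exhibits $(\tau,\overline{\CW})$ as a Whittaker model of $\pi^*$ with respect to $\psi^{-1}$, as claimed. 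I expect the only delicate point to be the verification that $\Phi$ lands in the smooth part of the dual, i.e.\ that each $B(\cdot,\overline{W'})$ is fixed by an open compact subgroup; this is precisely where unitarizability (rather than the mere existence of some invariant pairing) is used, and the remaining steps are formal once $G_n$-invariance and nondegeneracy of $(\cdot,\cdot)$ have been recorded.
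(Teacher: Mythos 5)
Your proposal is correct and follows essentially the same route as the paper, which likewise uses the $G_n$-invariant inner product to realize each $\overline{W}$ as a smooth functional $l_{\overline{W}}$ on $\CW(\pi,\psi)$ and deduces $\tau\cong\pi^*$ from invariance and irreducibility; you have merely written out the details (bilinearity, smoothness of the functional, nondegeneracy) that the paper leaves implicit.
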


We also need the following lemma, which is Corollary 2.1 in
\cite{Ch}.

\begin{lem}
\label{le2.3}Let $H$ be a complex smooth function on $G_r$
satisfying

$$H(ug)=\psi(u)H(g)$$
for any $g\in G_r$, $u\in N_r$.

If for any irreducible generic smooth representation $\rho$ of
$G_r$, and for any Whittaker function $W\in
\mathcal{W}(\rho,\psi^{-1})$, the function defined by the following
integral vanishes for $Re(s)$ large

$$\int_{N_r\backslash G_r} H(g)W(g)|det(g)|^{s-k}dg=0,$$
where $k$ is some fixed constant, then $H\equiv 0$.
\end{lem}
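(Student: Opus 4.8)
The plan is to prove the contrapositive-style statement by extracting enough information from the vanishing of all the Rankin-Selberg integrals $\int_{N_r\backslash G_r} H(g)W(g)|\det g|^{s-k}\,dg$ to force $H$ to vanish pointwise. First I would fix a representative $g \in G_r$ and use the Iwasawa-type decomposition adapted to the quotient $N_r \backslash G_r$: every element can be written (up to the $N_r$ action, which $H$ and $W$ both transform under by characters that cancel, leaving only $|\det|$) as $a k$ with $a$ in the torus $A_r$ and $k$ in a maximal compact $K_r = GL_r(\CO)$. The key point is that the integral then unwinds into an integral over $A_r/(A_r\cap N_r)$-type coordinates against $K_r$, and the vanishing for $\Re(s)$ large — hence, after noting the integrand is a finite sum of terms $c_i(g)q^{-s m_i}$ in a suitable region, for \emph{all} $s$ — is a statement about the Mellin transform in the $|\det|$ variable (equivalently in the "size" of the central-like direction of $a$).

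Next I would argue that it suffices to separate variables: by choosing $W$ to be supported near a fixed coset and concentrated (using smoothness of $\rho$, one can take $W$ locally constant with small support and prescribed value at a chosen point $k_0 \in K_r$), the vanishing of the integral for all such $W$ and all generic $\rho$ gives, first, that $H$ restricted to $A_r K_r$ has vanishing Mellin transform against all the relevant quasi-characters of $A_r$, and second, via varying $k_0$, that this holds for every compact direction. The decisive input is that the Whittaker functions of generic irreducible representations of $G_r$ are rich enough: as $\rho$ ranges over all generic irreducible smooth representations (for instance, full induced representations from characters of the torus), the restrictions $W|_{A_r K_r}$ span a space large enough to detect any smooth $H$ with the given equivariance. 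Concretely, one uses that for $\rho$ an unramified principal series the functions $W(a)$ for $a \in A_r$ realize, via the Jacquet–Shalika / Casselman–Shalika formula, all Laurent polynomials in the Satake-type parameters times a fixed factor; letting the inducing character vary over all (including ramified) characters then exhausts the needed test functions, and finite Fourier/Mellin inversion on the compact and discrete pieces recovers $H$.

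Then, putting these together, the vanishing $\int H \cdot W \cdot |\det|^{s-k} = 0$ for all $W, \rho, s$ forces all "Mellin–Fourier coefficients" of $H$ along $N_r\backslash G_r$ to vanish, and since $H$ is smooth (locally constant) it is determined by finitely many such coefficients on each compact piece; hence $H \equiv 0$. The main obstacle I expect is the separation-of-variables step: making precise that the family $\{W|_{N_r\backslash G_r} : W \in \CW(\rho,\psi^{-1}),\ \rho \text{ generic irreducible}\}$ is "Mellin-dense" enough to conclude vanishing of $H$ from vanishing of all the pairings. This is exactly where one needs a careful choice of $\rho$ (principal series suffice, and one may even reduce to a single well-chosen family by an open-orbit / Bruhat-cell argument à la Jacquet–Piatetski-Shapiro–Shalika), together with the non-vanishing of Whittaker functions on the big cell and a finite-dimensionality (admissibility) argument to pass from "all Mellin transforms vanish" to "the function vanishes." Everything else — the Iwasawa unwinding, the reduction from "large $\Re(s)$" to "all $s$" via rationality in $q^{-s}$, and the finite Fourier inversion on $K_r$ — is routine once this density is in hand.
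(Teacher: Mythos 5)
The paper does not give its own proof of this lemma: it invokes Corollary~2.1 of \cite{Ch} and merely remarks that Chen's argument applies verbatim when $H$ is not assumed to lie in a Whittaker model. So there is no internal proof to compare against; what can be assessed is whether your sketch would close on its own.

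Your overall shape — Iwasawa unwinding, reading the vanishing as a Mellin identity in the determinant direction, and then appealing to the richness of Whittaker functions — is the right kind of argument, and you correctly identify the density of $\{W : W\in\CW(\rho,\psi^{-1}),\ \rho\ \text{generic irreducible}\}$ as the decisive point. But that is precisely the step you do not prove, and the route you propose to close it does not obviously work. Two concrete problems. First, your plan is to start from unramified principal series and the Casselman--Shalika formula and then ``let the inducing character vary.'' For an unramified $\rho$ the Whittaker function is right $K_r$-invariant, so the pairing $\int_{A_r}\int_{K_r}H(ak)W(a)\cdots$ only sees the $K_r$-average $\int_{K_r}H(ak)\,dk$; to detect $H$ pointwise you must pass to ramified $\rho$, and the Casselman--Shalika formula no longer applies there, so ``letting the character vary'' is not a proof. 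Second, the intermediate assertion that a Whittaker function can be ``taken locally constant with small support and prescribed value at a chosen $k_0\in K_r$'' is not available: a Whittaker function is not an arbitrary smooth function, and its values are constrained by the representation. The precise input that does the work — and that your sketch never invokes — is the Gelfand--Kazhdan/Bernstein--Zelevinsky theorem on Kirillov models: for every generic irreducible $\rho$ of $G_r$, the restriction map $\CW(\rho,\psi^{-1})\to\{ \text{functions on }P_r\}$ has image containing all of $C_c^\infty(N_r\backslash P_r,\psi^{-1})$. Combined with right translation by $K_r$ and central elements (to reach the remaining directions in $N_r\backslash G_r$) and with the equivariance $H(ug)W(ug)=H(g)W(g)$, this is what converts ``all pairings vanish'' into ``$H\equiv 0$.'' Without this ingredient the separation-of-variables step remains a genuine gap, as you yourself flag.

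A smaller point: the parenthetical ``after noting the integrand is a finite sum of terms $c_i(g)q^{-sm_i}$'' is confused. For each fixed $g$ the factor $|\det g|^{s-k}$ is a \emph{single} power of $q^{-s}$. The correct bookkeeping is that, for $\Re(s)$ large, the integral is a convergent Laurent series $\sum_n c_n q^{-ns}$ whose $n$-th coefficient is the integral of $H(g)W(g)|\det g|^{-k}$ over the shell $\{|\det g|=q^{-n}\}$; the hypothesis forces every $c_n=0$, and no continuation to ``all $s$'' is needed or used. This is harmless but worth getting straight, since it is where the passage from the identity in $s$ to a family of shell integrals happens.
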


\textbf{Remark.} We don't need to require the function $H$ to be in
the Whittaker model of some generic representation as in \cite{Ch}.
The proof there works for general Whittaker functions.   \\

Next we introduce the Bessel functions briefly. For more details see
\cite{Baruch:2005, cjs}. Let $(\pi,V)$ be a generic irreducible
representation of $G_n$. Take $W_v\in \mathcal{W}(\pi, \psi)$.
Consider the integral for $g\in N_nA_n\omega_n N_n$

\[
\int_{N_n}W_v(gu)\psi^{-1}(u)du.
\]
This integral stabilizes along any compact open filtration of $N_n$,
and the map
\[
v\to \int_{N_n}^*W_v(gu)\psi^{-1}(u)du,
\]
where $\int^*$ denotes the stabilized integral, defines a Whittaker
functional on $V$. By the uniqueness of Whittaker functional, there
exists a scalar $j_{\pi,\psi}(g)$ such that

\[
\int_{N_n}^* W_v(gu)\psi^{-1}(u)du=j_{\pi,\psi}(g)W_v(I).
\]

\begin{defin}
\label{def2} The assignment $g\to j_\pi(g)=j_{\pi,\psi}$ defines a
function on $N_nA_n\omega_n N_n$, which is called the \emph{Bessel
function} of $\pi$ attached to $\omega_n$.
\end{defin}

We extend $j_{\pi}$ to $G_n$ by putting $j_{\pi}(g)=0$ if $g\notin
N_nA_n\omega_nN_n$, and still use $j_{\pi}$ to denote it and call it
the Bessel function of $\pi$. It is easy to check that $j_{\pi}$ is
locally constant on $N_nA_n\omega_n N_n$(See Theorem 1.7 and remarks
above it in \cite{Baruch:2005}) and
$j_{\pi}(u_1gu_2)=\psi(u_1)\psi(u_2)j_{\pi}(g)$ for any $u_1, u_2\in
N_n, g\in G_n$.

A property of Bessel function which is important to us is the
following weak kernel formula, see Theorem 4.2 in \cite{cjs}. \\

\begin{thm}
\label{thm1} (Weak Kernel Formula) For any $b\omega_n$,
$b=diag(b_1,...,b_n)\in A_n$, and any $W\in \mathcal{W}$, we have

$$W(b\omega_n)=$$
$$\int j_{\pi}\left(b\omega_n \begin{pmatrix} a_1 & \\ x_{21} & a_2 \\ & & \ddots \\
x_{n-1,1} & \cdots & x_{n-1,n-2} & a_{n-1} \\ & & & &  1
\end{pmatrix}^{-1} \right) W\begin{pmatrix} a_1 & \\ x_{21} & a_2 \\
& & \ddots \\ x_{n-1,1} & \cdots & x_{n-1,n-2} & a_{n-1}\\ & & & & 1
\end{pmatrix} $$
$$|a_1|^{-(n-1)}da_1|a_2|^{-(n-2)}dx_{21}da_2\cdots |a_{n-1}|^{-1}dx_{n-1,1}\cdots dx_{n-1,n-2}da_{n-1}, $$
where the right side is an iterated integral, $a_i$ is integrated
over $F^{\times}\subset F$ for $i=1,...,n-1$, $x_{ij}$ is integrated
over $F$ for all relevant $i,j$, and all measures are additive
self-dual Haar measures on $F$.
\end{thm}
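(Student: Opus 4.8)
The plan is to prove the identity by induction on $n$. For $n=1$ there are no integration variables, $\omega_1=1$, and the Bessel function is the central character $\omega_\pi$, so the asserted formula is simply $W(b_1)=\omega_\pi(b_1)W(1)$, which holds because on $GL_1$ the Whittaker model is the representation itself. For the inductive step it is convenient to rephrase the statement geometrically. Write $Y_n$ for the group of lower--triangular matrices contained in the mirabolic subgroup $P_n$, i.e.\ matrices of exactly the shape of the variable occurring on the right--hand side, and let $d\mu$ be the measure $|a_1|^{-(n-1)}\cdots|a_{n-1}|^{-1}\,da_1\,dx_{21}\,da_2\cdots da_{n-1}$ displayed there. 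The theorem then reads
\[
W(b\omega_n)=\int_{Y_n} j_\pi\bigl(b\omega_n\, y^{-1}\bigr)\,W(y)\,d\mu(y),
\]
the right side being understood as the iterated integral in the indicated order; this, together with the fact that the integral defining $j_\pi$ is only a \emph{stabilised} integral over $N_n$, is what the word ``weak'' records.

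First I would dispose of the analytic points. Since $j_\pi$ is bi--$N_n$--equivariant against $\psi$ and locally constant on the big cell $N_nA_n\omega_nN_n$, and since $b\omega_n y^{-1}$ lies in this big cell for $y$ outside a proper closed subvariety of $Y_n$, each $x_{ij}$-- and $a_i$--integral in the iterated expression becomes, after restricting $F$ to a sufficiently large compact open subgroup, a finite sum of absolutely convergent integrals; this makes sense of the right side in the stated order. Next I would observe that $Y_n\cap N_n=\{1\}$ and $\dim Y_n=\dim(N_n\backslash P_n)$, so that $Y_n$ maps injectively into $N_n\backslash P_n$ with dense open image, and that $d\mu$ is — up to the displayed powers of $|a_i|$ — the associated $N_n$--invariant quotient measure. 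In particular the identity depends only on $W|_{P_n}$, and both sides satisfy $F(u\,b\omega_n)=\psi(u)F(b\omega_n)$ for $u\in N_n$, so the two sides live in the same equivariant space of functions.

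The heart of the matter is the descent from $GL_n$ to $GL_{n-1}$. Factor $y=y'v$, with $v$ running over the bottom nontrivial row of $Y_n$ (the entries $x_{n-1,1},\dots,x_{n-1,n-2}$ and the parameter $a_{n-1}$) and $y'$ over the copy of $Y_{n-1}$ sitting in the upper--left $(n-1)\times(n-1)$ block, and factor $d\mu$ compatibly. Using the bi--$N_n$--equivariance of $j_\pi$ one checks that the dependence of $j_\pi(b\omega_n y^{-1})$ on the variables $x_{n-1,j}$ is through a single additive character; integrating $W(y)$ against that character along those variables turns $W|_{P_n}$ into a $\psi$--Whittaker function on $GL_{n-1}$, while the kernel, by the defining property $\int_{N_n}^{*}W_{v_0}(h u)\psi^{-1}(u)\,du=j_\pi(h)W_{v_0}(I)$ of the Bessel function, collapses onto $j$ of the corresponding $GL_{n-1}$--datum evaluated at a matrix of the shape $b'\omega_{n-1}(\cdot)^{-1}$, the surviving $a_{n-1}$--integral being absorbed into a modulus shift. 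Feeding this into the $(n-1)$--dimensional case then produces the formula for $GL_n$; for the induction to close cleanly one proves in this form the slightly more flexible statement valid for any $\psi$--Whittaker function on $GL_m$ arising from the $P_m$--restriction of a generic representation.

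The step I expect to be the main obstacle is precisely this reduction: matching, with all normalisations correct, the partially integrated kernel with a genuine Bessel function one dimension down, and in particular tracking the powers $|a_i|^{-(n-i)}$ — each descent from $GL_{n-1}$ to $GL_n$ must contribute exactly one extra factor of $|\det|$ on the $GL_{n-1}$--block together with the new factor $|a_{n-1}|^{-1}$, and these have to be produced by the Jacobians and modulus characters of the unipotent integration. A secondary but genuine difficulty is that $\pi$ is only assumed generic irreducible, not unitarisable, so no $L^2$--Plancherel or reproducing--kernel shortcut is available; everything must stay inside the smooth Whittaker model, which is the reason the iterated, stabilised formulation of the integral cannot be avoided.
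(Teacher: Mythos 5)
The paper does not actually prove this statement: it is quoted verbatim from the author's companion paper \cite{cjs} (Theorem 4.2 there), so there is no in-paper argument for me to compare yours against. I will therefore just assess your proposal on its own terms.

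Your base case and the geometric/measure-theoretic framing are fine, and the observation that the formula only involves $W|_{P_n}$ with both sides living in the same $\psi$-equivariant space is a correct and useful reduction. The problem is the inductive step, which you yourself flag as the ``main obstacle'' and then do not carry out; unfortunately it is not merely a matter of bookkeeping, and I do not think the step you describe can be made to work as stated. Factor $y=y'v$ as you propose, with $v$ carrying the last nontrivial row $(x_{n-1,1},\dots,x_{n-1,n-2},a_{n-1})$. Writing $\omega_n v^{-1}\omega_n^{-1}=tu$ with $t=\mathrm{diag}(1,a_{n-1}^{-1},1,\dots,1)$ and $u\in N_n$, one gets $j_\pi(b\omega_n y^{-1})=\psi\bigl((bt)u(bt)^{-1}\bigr)\,j_\pi(bt\,\omega_n y'^{-1})$. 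But $u$ has its entries in the \emph{second row}, columns $3,\dots,n$, so the character $\psi$ sees only the single superdiagonal coordinate coming from $x_{n-1,n-2}$; the remaining $x_{n-1,j}$, $j<n-2$, enter $j_\pi(b\omega_n y^{-1})$ not at all. So the picture of ``integrating $W(y)$ against a character of the whole bottom row'' is wrong: most of the bottom-row integration is unoscillatory and hence not a Fourier transform. The iterated ordering in the statement — bottom-row variables outermost — is precisely what lets the inner integrals stabilize before the bottom row is touched; your reorganisation collides with that.

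More fundamentally, the quantity $j_\pi(bt\,\omega_n y'^{-1})$ that survives is still the Bessel function of the $GL_n$ representation $\pi$, evaluated at a genuine $GL_n$ argument. It does not ``collapse onto $j$ of the corresponding $GL_{n-1}$ datum'': there is no $GL_{n-1}$ representation lurking here whose Bessel function could serve as kernel in an $(n-1)$-dimensional instance of the statement. The suggested ``more flexible'' version — the same formula for an arbitrary $\psi$-Whittaker function arising from a $P_m$-restriction — does not repair this, because once one drops the single ambient representation there is no Bessel function to put in the kernel at all; the inversion kernel is an invariant of $\pi$, not of its $P_n$-restriction. So the induction on the rank of the group cannot close. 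A correct argument has to stay inside $GL_n$ throughout: one shows the right-hand side, viewed as a functional of $W\in\mathcal{W}(\pi,\psi)$, is a $\psi$-Whittaker functional twisted to the point $b\omega_n$, then uses uniqueness of Whittaker functionals together with Howe-vector approximations of $j_\pi$ to identify the proportionality constant and to give the iterated, stabilised integrals a meaning — that is the content of ``weak,'' and it is what is done in \cite{cjs}.
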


\section{Howe vectors}

In this section, we will introduce and study Howe vectors. Following
\cite{Baruch:2005}, for a positive integer $m$, let
$K_n^m=I_n+M_n(\mathfrak{p}^m)$, here $\mathfrak{p}$ is the maximal
ideal of $\mathcal{O}$ and $\CO$ is the ring of integers of $F$. Use
$\varpi$ to denote an uniformizer of $F$. Let

$$d=\begin{pmatrix} 1 & \\ & \varpi^2 \\ & & \varpi^4 \\
& & & \ddots \\ & & & & \varpi^{2n-2}   \end{pmatrix}.$$

Put $J_{n,m}=d^mK_n^md^{-m}$, $N_{n,m}=N_n\cap J_{n,m}$,
$\bar{N}_{n,m}=\bar{N}_n\cap J_{n,m}$, $\bar{B}_{n,m}=\bar{B}_n\cap
J_{n,m}$ and $A_{n,m}=A_n\cap J_m$, then

$$J_{n,m}=\bar{N}_{n,m}A_{n,m}N_{n,m}= \bar{B}_{n,m}N_{n,m}=N_{n,m}\bar B_{n,m}.$$

For $j\in J_{n,m}$, write $j=\bar{b}_jn_j$ with respect to the above
decomposition, as in \cite{Baruch:2005}, define a character $\psi_m$
on $J_{n,m}$ by

$$\psi_m(j)=\psi(n_j).$$

\begin{xrem}
We will write $J_m$ for $J_{n,m}$, and $\psi$ for $\psi_m$ when
there is no confusion.
\end{xrem}

In this section we assume $\pi$ is a generic irreducible
unitarizable representation of $G_n$.

\begin{defin}
\label{def3} $W\in \mathcal{W}(\pi,\psi)$ is called a \emph{Howe
vector} of $\pi$ of level $m$ with respect to $\psi_m$ if

\begin{equation}
W(gj)=\psi_m(j)W(g) \hspace{1cm} \label{9}
\end{equation}
for all $g\in G_n$, $j\in J_{m}=J_{n,m}$.
\end{defin}

\begin{xrem}
It follows that the level $m$ must be greater than or equal to the
conductor of the central character $\omega_{\pi}$ of $\pi$ if the
Howe vector exists.
\end{xrem}

For each $W\in \mathcal{W}(\pi,\psi)$, let $M$ be a positive
constant such that $R(K_{n}^M)W=W$ where $R$ denotes the action of
right multiplication. For any $m>3M$, put

\[
W_m(g)=\int_{N_{n,m}}W(gu)\psi^{-1}(u)du,
\]
then by Lemma 7.1 in \cite{Baruch:2005}, we have

\[
W_m(gj)=\psi_m(j)W_m(g), \forall j\in J_m, \ \ \forall g\in G_n.
\]
This gives the existence of Howe vectors when $m$ is large enough.
The following lemma establishes its uniqueness in Kirillov model,
see Theorem 5.2 in \cite{cjs} for the proof.

\begin{lem}
\label{le3.2} Assume $W\in \mathcal{W}(\pi,\psi)$ satisfying (3.1).
Let $h\in G_{n-1}$, if

$$W\begin{pmatrix} h & \\ & 1 \end{pmatrix}\neq 0,$$
then $h\in N_{n-1}\bar{B}_{n-1,m}$. Moreover

$$W\begin{pmatrix} h & \\ & 1 \end{pmatrix}=\psi(u)W(I)$$
if $h=u\bar{b}$, with $u\in N_{n-1}$, $\bar{b}\in \bar{B}_{n-1,m}$.

\end{lem}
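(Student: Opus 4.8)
The plan is to prove the statement by induction on $n$, in the slightly more flexible form in which $W$ is merely assumed to be a smooth function on $G_n$ with $W(ug)=\psi(u)W(g)$ for $u\in N_n$ and satisfying the Howe relation \eqref{9}; the base case $n=2$ is a direct computation. First, the ``moreover'' assertion is a formal consequence of the support assertion. Indeed $\psi_m$ is trivial on $\bar B_{n,m}$ (a lower triangular $j\in J_m$ has $n_j=I$ in the decomposition $j=\bar b_jn_j$), and the conditions cutting out $J_{n-1,m}$ inside $G_{n-1}$ coincide with those cutting out the block subgroup $\left\{\left(\begin{smallmatrix}\bar b& \\ & 1\end{smallmatrix}\right):\bar b\in\bar B_{n-1,m}\right\}$ inside $J_{n,m}$ (both amount to $\bar b_{lq}\varpi^{2m(q-l)}\in\mathfrak{p}^m$ for all $l,q$). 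Hence $\left(\begin{smallmatrix}\bar b& \\ & 1\end{smallmatrix}\right)\in\bar B_{n,m}$, and \eqref{9} gives $W\left(\begin{smallmatrix}u\bar b& \\ & 1\end{smallmatrix}\right)=\psi(u)\,W(I)$ for $u\in N_{n-1}$, $\bar b\in\bar B_{n-1,m}$. So it suffices to prove that $W\left(\begin{smallmatrix}h& \\ & 1\end{smallmatrix}\right)\neq 0$ implies $h\in N_{n-1}\bar B_{n-1,m}$.

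\emph{The conjugation trick.} Put $p=\left(\begin{smallmatrix}h& \\ & 1\end{smallmatrix}\right)$ and suppose $W(p)\neq 0$. Whenever $v\in J_m$ satisfies $pvp^{-1}\in N_n$, the two equivariances of $W$ force $\psi_m(v)=\psi(pvp^{-1})$. Apply this to the elementary unipotents $v=I_n+tE_{i,n}$, $1\le i\le n-1$: such $v$ lies in $N_{n,m}\subset J_m$ exactly when $t\in\mathfrak{p}^{m(1-2(n-i))}$, and conjugation by $p$ moves the entry $t$ in position $(i,n)$ to a multiple of the $i$-th column of $h$ placed in column $n$, leaving the $(n,n)$-entry equal to $1$; hence $pvp^{-1}\in N_n$ for \emph{every} $h$, with $\psi(pvp^{-1})=\psi(t\,h_{n-1,i})$, while $\psi_m(v)=1$ for $i<n-1$ and $\psi_m(v)=\psi(t)$ for $i=n-1$. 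Since $\psi$ has conductor exactly $\CO$, running $t$ over $\mathfrak{p}^{m(1-2(n-i))}$ forces $h_{n-1,j}\in\mathfrak{p}^{m(2n-2j-1)}$ for $j<n-1$ and $h_{n-1,n-1}\in 1+\mathfrak{p}^m$ --- precisely the congruences permitted in the bottom row of an element of $\bar B_{n-1,m}$.

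\emph{Reduction to $G_{n-2}$ and induction.} There is therefore $\bar c\in\bar B_{n-1,m}$, equal to $I_{n-1}$ off its bottom row, such that $h\bar c$ has bottom row $(0,\dots,0,1)$; a suitable $n'\in N_{n-1}$ then clears the last column of $h\bar c$, giving $n'h\bar c=\left(\begin{smallmatrix}h'& \\ & 1\end{smallmatrix}\right)$ with $h'\in G_{n-2}$. As $\left(\begin{smallmatrix}n'h\bar c& \\ & 1\end{smallmatrix}\right)=\left(\begin{smallmatrix}n'& \\ & 1\end{smallmatrix}\right)p\left(\begin{smallmatrix}\bar c& \\ & 1\end{smallmatrix}\right)$ with $\left(\begin{smallmatrix}n'& \\ & 1\end{smallmatrix}\right)\in N_n$ and $\left(\begin{smallmatrix}\bar c& \\ & 1\end{smallmatrix}\right)\in\bar B_{n,m}$, this changes $W(p)$ only by a nonzero scalar; so $\widetilde W(n'h\bar c)\neq 0$, where $\widetilde W(g):=W\left(\begin{smallmatrix}g& \\ & 1\end{smallmatrix}\right)$ for $g\in G_{n-1}$. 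The function $\widetilde W$ is smooth, left $(N_{n-1},\psi)$-equivariant and right $(J_{n-1,m},\psi_m)$-equivariant --- here one uses that $j'\mapsto\left(\begin{smallmatrix}j'& \\ & 1\end{smallmatrix}\right)$ embeds $J_{n-1,m}$ into $J_{n,m}$ compatibly with the characters --- so the inductive hypothesis applied to $\widetilde W$ at $n'h\bar c=\left(\begin{smallmatrix}h'& \\ & 1\end{smallmatrix}\right)$ yields $h'\in N_{n-2}\bar B_{n-2,m}$. Using the analogous compatible embedding $\bar B_{n-2,m}\hookrightarrow\bar B_{n-1,m}$ we get $n'h\bar c\in N_{n-1}\bar B_{n-1,m}$, hence $h\in N_{n-1}\bar B_{n-1,m}$. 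The base case $n=2$ is immediate: $W\left(\begin{smallmatrix}a& \\ & 1\end{smallmatrix}\right)\neq 0$ forces $\psi\bigl(t(a-1)\bigr)=1$ for all $t\in\mathfrak{p}^{-m}$, i.e. $a\in 1+\mathfrak{p}^m=\bar B_{1,m}$, and then the value equals $W(I)$.

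The main obstacle is the bookkeeping with these congruence subgroups. One must check that the precision $\mathfrak{p}^{m(2n-2j-1)}$ produced by the conjugation trick matches \emph{exactly} the off-diagonal tolerance of $\bar B_{n-1,m}$, so that the correcting element $\bar c$ really lies in $\bar B_{n-1,m}$, and that the embeddings $J_{n-1,m}\hookrightarrow J_{n,m}$ and $\bar B_{n-1,m}\hookrightarrow\bar B_{n,m}$ are compatible with $\psi_m$. Both facts rest on the precise choice $d=\diag(1,\varpi^2,\varpi^4,\dots,\varpi^{2n-2})$: the successive gaps of size $2$ are exactly what make the fractional ideals $\mathfrak{p}^{m(1-2(n-i))}$ large enough for the conductor of $\psi$ to force the stated congruences, and what make the defining conditions for $J_{n-1,m}$ and $J_{n,m}$ coincide on the common block.
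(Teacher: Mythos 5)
Your proof is correct. The paper itself defers to Theorem~5.2 of \cite{cjs} for this lemma, so there is no in-paper argument to compare against; but the conjugation trick (probing the bottom row of $h$ with $v=I_n+tE_{i,n}\in N_{n,m}$, forcing $\psi_m(v)=\psi(pvp^{-1})$ when $W(p)\neq 0$), the exact match between the resulting congruences $h_{n-1,j}\in\mathfrak{p}^{m(2n-2j-1)}$, $h_{n-1,n-1}\in 1+\mathfrak{p}^m$ and the bottom-row tolerances of $\bar B_{n-1,m}$, and the reduction to $G_{n-2}$ via a corrector $\bar c\in\bar B_{n-1,m}$ and a clearing $n'\in N_{n-1}$ followed by induction on $n$ are all sound, and this is the standard argument for such Howe-vector/Kirillov support statements. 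Two small points worth making explicit in a write-up: (i) the inductive hypothesis must indeed be stated for arbitrary smooth functions satisfying the two equivariances, not for Whittaker functions of a fixed representation, since $\widetilde W(g)=W\left(\begin{smallmatrix}g&\\ &1\end{smallmatrix}\right)$ is not obviously a Whittaker function of anything — you flag this correctly; (ii) it is worth noting, as you implicitly use, that $\bar B_{n-1,m}=\bar B_{n-1}\cap J_{n-1,m}$ is a group (both factors are), so the final concatenation $h=(n')^{-1}\left(\begin{smallmatrix}u''&\\ &1\end{smallmatrix}\right)\left(\begin{smallmatrix}\bar b''&\\ &1\end{smallmatrix}\right)\bar c^{-1}$ lands in $N_{n-1}\bar B_{n-1,m}$.
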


\begin{xrem}
Thus, given $\pi$, if $m$ is large enough, there exists a unique
vector $W_{v_m}\in \CW(\pi,\psi)$ satisfying (3.1) and
$W_{v_m}(I)=1$. We will call this vector as the normalized Howe
vector of level $m$ with respect to $\psi_m$.
\end{xrem}

\begin{xrem}
By the constructions above, Howe vectors exist if their levels $m$
are sufficiently large. So when we talk about Howe vectors, we
implicitly mean the levels are large enough so that these vectors
exist.
\end{xrem}

\begin{proposition}
\label{prop3.3} If $W_{v_m}$ is the normalized Howe vector of $\pi$
of level $m$ with respect to $\psi_m$, then
$\widetilde{W}_{\omega_n.v_m}$ is the normalized Howe vector of
level $m$ with respect to $\psi_m^{-1}$ for
$(\pi^*,\widetilde{\CW})$, and
\[
\overline{W}_{v_m}(g)=W_{v_m}(\omega_n {^t}g^{-1}\omega_n) \ \ \ \ \
\ \forall g\in G_n.
\]
\end{proposition}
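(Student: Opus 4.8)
The plan is to carry out everything with the single involution
\[
\theta\colon G_n\to G_n,\qquad \theta(g)=\omega_n\,{}^{t}g^{-1}\omega_n .
\]
Since $W_{\omega_n.v_m}(g)=W_{v_m}(g\omega_n)$, we have $\widetilde{W}_{\omega_n.v_m}(g)=W_{\omega_n.v_m}(\omega_n\,{}^{t}g^{-1})=W_{v_m}(\omega_n\,{}^{t}g^{-1}\omega_n)=(W_{v_m}\circ\theta)(g)$, so the second (displayed) assertion is exactly $\overline{W}_{v_m}=\widetilde{W}_{\omega_n.v_m}$, and it is enough to show that both $W_{v_m}\circ\theta$ and $\overline{W}_{v_m}$ equal the normalized Howe vector of $\pi^{*}$ of level $m$ with respect to $\psi_m^{-1}$. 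This will follow from the uniqueness statement (Lemma~\ref{le3.2} and the remark after it) once we check the two required transformation properties and the normalization, and once we know the two candidate ambient Whittaker models of $\pi^{*}$ coincide.

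First I would record the behaviour of $\theta$. It is an automorphism of $G_n$ (conjugation by $\omega_n$ composed with the transpose-inverse antiautomorphism), in fact an involution, and it stabilises $N_n$, $\bar N_n$, $A_n$, $\bar B_n$ and $K_n^{m}$. Because $d$ is diagonal and $\omega_n d\omega_n=\varpi^{2n-2}d^{-1}$, one finds $\theta(d^{m})=\varpi^{-(2n-2)m}d^{m}$, hence $\theta(d^{m}kd^{-m})=d^{m}\theta(k)d^{-m}$ for $k\in K_n^{m}$; therefore $\theta$ stabilises $J_m$, and likewise $\bar B_{n,m}$ and $N_{n,m}$. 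Consequently, if $j=\bar b_j n_j$ is the factorisation in $J_m=\bar B_{n,m}N_{n,m}$ used to define $\psi_m$, then $\theta(j)=\theta(\bar b_j)\,\theta(n_j)$ is the corresponding factorisation of $\theta(j)$, so $\psi_m(\theta(j))=\psi(\theta(n_j))$. Inspecting the superdiagonal entries gives $\psi(\theta(u))=\psi(u)^{-1}$ for all $u\in N_n$, and hence $\psi_m\circ\theta=\psi_m^{-1}$ on $J_m$.

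Granting this, $W_{v_m}\circ\theta$ equals $\widetilde{W}$ for $W=W_{\omega_n.v_m}\in\CW(\pi,\psi)$, so it lies in $\widetilde{\CW}$, a Whittaker model of $\pi^{*}$ with respect to $\psi^{-1}$; it transforms on the left under $N_n$ by $\psi^{-1}$; using $W_{v_m}(gj)=\psi_m(j)W_{v_m}(g)$ and $\psi_m\circ\theta=\psi_m^{-1}$ one gets $(W_{v_m}\circ\theta)(gj)=\psi_m(j)^{-1}(W_{v_m}\circ\theta)(g)$ for $j\in J_m$; and $(W_{v_m}\circ\theta)(I)=W_{v_m}(I)=1$. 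Thus $\widetilde{W}_{\omega_n.v_m}$ is a normalized Howe vector of level $m$ with respect to $\psi_m^{-1}$ for $(\pi^{*},\widetilde{\CW})$, and by Lemma~\ref{le3.2} applied to $\pi^{*}$ (with $\psi^{-1}$ in place of $\psi$; here $m$ is taken large enough, which is harmless) it is the unique one --- this is the first assertion. On the other hand $\overline{W}_{v_m}$ lies in $\overline{\CW}$, which by Proposition~\ref{prop2} is also a Whittaker model of $\pi^{*}$ with respect to $\psi^{-1}$; since $|\psi_m(j)|=1$ one has $\overline{W}_{v_m}(gj)=\psi_m(j)^{-1}\overline{W}_{v_m}(g)$ and $\overline{W}_{v_m}(I)=1$, so $\overline{W}_{v_m}$ is a normalized Howe vector of level $m$ with respect to $\psi_m^{-1}$ as well. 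As $\pi^{*}$ is irreducible generic, its $\psi^{-1}$-Whittaker model inside $C^{\infty}(G_n)$ is unique, so $\overline{\CW}=\widetilde{\CW}$; the uniqueness of the normalized Howe vector in this common model then gives $\overline{W}_{v_m}=\widetilde{W}_{\omega_n.v_m}=W_{v_m}\circ\theta$, i.e.\ the displayed identity.

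The only step with real content is the character computation $\psi_m\circ\theta=\psi_m^{-1}$, and within it the verification that $\theta$ carries the $\bar B_{n,m}N_{n,m}$-factorisation defining $\psi_m$ to the corresponding factorisation of the image; everything else is formal, given the uniqueness results already in place.
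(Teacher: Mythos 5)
Your argument is correct and follows essentially the same route as the paper: verify that both $\widetilde{W}_{\omega_n.v_m}$ and $\overline{W}_{v_m}$ are normalized Howe vectors of level $m$ with respect to $\psi_m^{-1}$ in a $\psi^{-1}$-Whittaker model of $\pi^{*}$, then invoke the uniqueness of Howe vectors to conclude. The only difference is that you spell out the verification of $\theta(J_m)=J_m$ and $\psi_m\circ\theta=\psi_m^{-1}$ (and the coincidence $\overline{\CW}=\widetilde{\CW}$ by uniqueness of Whittaker models), which the paper leaves as ``one can check.''
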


\begin{proof}
Consider the Whittaker function $\widetilde{W}_{\omega_n.v_m}$. For
$j\in J_m$, one can check $\omega_n{^tj^{-1}}\omega_n\in J_m$, and
\[
\psi_m(\omega_n {^tj^{-1}} \omega_n)=\psi_m^{-1}(j).
\]
Thus for any $g\in G_n, j\in J_m$,
\[
\widetilde{W}_{\omega_n.v_m}(gj)=\widetilde{W}_{{^tj^{-1}}\omega_n.v_m}(g)
=\widetilde{W}_{\psi_m^{-1}(j)\omega_n.v_m}(g)=\psi_m^{-1}(j)\widetilde{W}_{\omega_n.v_m}(g).
\]
This shows that $\widetilde{W}_{\omega_n.v_m}$ is the normalized
Howe vector of level $m$ with respect to $\psi_m^{-1}$ for
$(\pi^*,\widetilde{\CW})$.

On the other hand, by Proposition \ref{prop2}, $\overline{\CW}$ is a
Whittaker model of $\pi^*$ with respect to $\psi^{-1}$. Take
$\overline{W}_{v_m}\in \overline{\CW}$, for any $g\in G_n, j\in
J_m$,
\[
\overline{W}_{v_m}(gj)=\overline{W}_{v_m}(g)\overline{\psi_m(j)}
=\psi_m^{-1}(j)\overline{W}_{v_m}(g),
\]
which implies that $\overline{W}_{v_m}$ is the normalized Howe
vector of level $m$ with respect to $\psi_m^{-1}$. Thus by the
uniqueness of Howe vectors, we have
\[
\widetilde{W}_{\omega_n.v_m}(g)=\overline{W}_{v_m}(g)
 \ \ \ \ \forall g\in G_n \hspace{0.3cm} \cdots\cdots(2),
\]
which proves the proposition.
\end{proof}

\begin{proposition}
\label{prop3.4} Assume $n=2r+1$.  Let $g=\begin{pmatrix} 0&a
\\ b\omega_{r} &0 \end{pmatrix}\begin{pmatrix} u_{r}& 0
&u_{r}\omega_{r}a'u \\ 0&1&0 \\ 0&0& I_{r}  \end{pmatrix}$, with
$a=diag(a_1,...,a_{r+1})\in A_{r+1}$, $a'=diag(a'_1,...,a'_{r})\in
A_r$, $b=diag(b_1,...,b_{r})\in A_{r}$, $u_r\in N_r, u=(u_{ij})\in
N_r$,. $W_{v_m}$ is the normalized Howe vector. If $W_{v_m}(g)\neq
0$, then $\displaystyle\frac{a_i}{a_{i+1}}\in 1+\mathfrak{p}^m$ for
$i=1,2,...,r$, $\begin{pmatrix} I_r &0& \omega_ra'u \\ 0&1&0 \\ 0&0&
I_r \end{pmatrix} \in J_m$, and
\[
W_{v_m}(g)=W_{v_m}\left( \begin{pmatrix} 0&a \\ b\omega_{r} &0
\end{pmatrix}
\begin{pmatrix} u_{r}& 0 &0 \\ 0&1&0 \\ 0&0& I_{r}  \end{pmatrix} \right).
\]
\end{proposition}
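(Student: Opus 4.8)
The plan is to exploit the defining property \eqref{9} of the normalized Howe vector $W_{v_m}$ in the form $W_{v_m}(gj) = \psi_m(j)W_{v_m}(g)$ for $j \in J_m$, applied to a suitable element of $J_m$ that absorbs the extra unipotent block $\begin{pmatrix} I_r & 0 & \omega_r a' u \\ 0 & 1 & 0 \\ 0 & 0 & I_r\end{pmatrix}$. First I would write $g = h \cdot w$ where $h = \begin{pmatrix} 0 & a \\ b\omega_r & 0\end{pmatrix}\begin{pmatrix} u_r & 0 & 0 \\ 0 & 1 & 0 \\ 0 & 0 & I_r\end{pmatrix}$ (after suitably sorting the $u_r$ block, which commutes past) and $w = \begin{pmatrix} I_r & 0 & \omega_r a' u \\ 0 & 1 & 0 \\ 0 & 0 & I_r\end{pmatrix}$ is the upper-triangular unipotent on the right. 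The key computation is then to conjugate $w$ by the monomial-type matrix $\begin{pmatrix} 0 & a \\ b\omega_r & 0\end{pmatrix}$: since this matrix sends the ``top-right $r\times r$ block'' coordinates to the ``bottom-left'' ones, the conjugate $\begin{pmatrix} 0 & a \\ b\omega_r & 0\end{pmatrix}^{-1} w \begin{pmatrix} 0 & a \\ b\omega_r & 0\end{pmatrix}$ becomes a \emph{lower-triangular} unipotent element supported in the bottom-left $r\times r$ block, with entries rescaled by $b^{-1}(\cdot)\omega_r a' u \cdot (\cdot) a$-type factors.

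The next step is to show this conjugate lies in $J_m = \bar N_{n,m} A_{n,m} N_{n,m}$, in fact in $\bar N_{n,m}$, which forces the entries of $\omega_r a' u$ to lie in the appropriate powers of $\mathfrak p$ dictated by the conjugating torus element $d^m$ in the definition of $J_{n,m}$ --- this is exactly the assertion $\begin{pmatrix} I_r & 0 & \omega_r a' u \\ 0 & 1 & 0 \\ 0 & 0 & I_r\end{pmatrix} \in J_m$. Here I would use Lemma~\ref{le3.2} (the Kirillov-model uniqueness statement): applying $W_{v_m}$ to $g$ and noting $W_{v_m}(g) \ne 0$, and stripping off the bottom row via the mirabolic structure, one learns the relevant block of $g$ must lie in $N_{n-1}\bar B_{n-1,m}$; tracking which coordinates this constrains gives both $\tfrac{a_i}{a_{i+1}} \in 1+\mathfrak p^m$ (from the $A_{n,m}$ part, i.e. the diagonal torus entries of $b\omega_r$ relative to $a$ must be $m$-units) and the integrality of $\omega_r a' u$. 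Once $w \in J_m$ and $\psi_m(w) = 1$ (because $w$ becomes purely lower-triangular after the conjugation that defines $\psi_m$, so its $N$-part is trivial), the identity $W_{v_m}(g) = W_{v_m}(h w) = \psi_m(w)W_{v_m}(h) = W_{v_m}(h)$ follows, and $W_{v_m}(h)$ is precisely the right-hand side of the displayed equation.

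The main obstacle I expect is the bookkeeping in the conjugation step: one must carefully verify that conjugating $w$ by $\begin{pmatrix} 0 & a \\ b\omega_r & 0\end{pmatrix}$ (and past the $u_r$ factor) really does produce an element whose $\bar N A N$-decomposition has trivial $N$-part and whose $\bar N$-part lands in $\bar N_{n,m}$ exactly when $\omega_r a' u$ has the stated integrality --- the powers of $\varpi$ in $d^m = \mathrm{diag}(1,\varpi^{2m},\ldots,\varpi^{(2n-2)m})$ interact with the anti-diagonal reversal $\omega_r$ in a way that needs to be matched index by index. A secondary point requiring care is ensuring the $u_r \in N_r$ block genuinely commutes out to the right (or is absorbed on the left by the Whittaker equivariance $W_{v_m}(ug) = \psi(u)W_{v_m}(g)$) without interfering with the argument; since $u_r$ sits in the upper-left corner while the problematic coordinates $\omega_r a' u$ sit in the upper-right corner, this should be routine, but the character values must be tracked so that the final normalization $W_{v_m}(I) = 1$ is used consistently. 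Modulo these matrix manipulations, the argument is a direct application of \eqref{9} and Lemma~\ref{le3.2}.
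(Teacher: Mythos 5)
Your high-level idea (exploit the defining equivariance of the Howe vector and push unipotent elements through the monomial matrix $\begin{pmatrix} 0 & a \\ b\omega_r & 0 \end{pmatrix}$) is the right one, and you correctly identify that the goal is to show the block $w = \begin{pmatrix} I_r & 0 & \omega_r a' u \\ 0 & 1 & 0 \\ 0 & 0 & I_r \end{pmatrix}$ lies in $J_m$ and is killed by $\psi_m$. However, the mechanism you propose for deriving the necessary constraints does not work. You invoke Lemma~\ref{le3.2} to ``strip off the bottom row via the mirabolic structure,'' but the element $g$ is not of the form $\begin{pmatrix} h & \\ & 1 \end{pmatrix}$: the last row of the monomial factor is $(b_r, 0, \ldots, 0)$, so $g \notin P_n$, and Lemma~\ref{le3.2} simply does not apply to $g$. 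Moreover, a single conjugation of $w$ by the monomial cannot by itself ``force'' the entries of $\omega_r a' u$ into the correct powers of $\mathfrak{p}$; you are conflating ``if the conjugate were in $\bar N_{n,m}$ then $w$ would have small entries'' with an argument that \emph{proves} the conjugate actually has small entries.

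What the paper actually does, and what your sketch is missing, is an inductive peeling argument. One does not test $w$ all at once; instead one inserts carefully chosen elements $j_1, j_1', j_2, j_2', \ldots \in J_m$ on the right (first a row vector $x_1$ in positions just below the $(r+1)$-st diagonal entry, then a column vector $y_1$ just to its left, then similar $x_2, y_2, \ldots$) and conjugates each one through the monomial to the left, using $W_{v_m}(uh) = \psi(u)W_{v_m}(h)$. Each upper-triangular $j_i$ yields a scalar $\psi\bigl(x_{i1}(1 - a_i a_{i+1}^{-1})\bigr)$, which, since $x_{i1}$ ranges over $\mathfrak{p}^{-m}$ and $W_{v_m}(g)\ne 0$, forces $a_i a_{i+1}^{-1}\in 1+\mathfrak{p}^m$; each lower-triangular $j_i'$ then uses the previously established unit condition $a_i a_{i+1}^{-1}\in 1+\mathfrak{p}^m$ together with the larger range of $y_i$ to bound one more diagonal (and off-diagonal) entry of $a'$ and $u$ (e.g.\ $a'_1\in\mathfrak{p}^{-3m}$, then $a'_2\in\mathfrak{p}^{-7m}$ and $a'_1 u_{12}\in\mathfrak{p}^{-5m}$, and so on), and the corresponding column of $\omega_r a' u$ is absorbed as an explicit element of $J_m$. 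These constraints are genuinely interleaved: the bound on $a'_2$ requires first knowing $a_2 a_3^{-1}\in 1+\mathfrak{p}^m$, which requires first having already peeled the $a'_1$ column. Because this dependency chain is the heart of the proof, the ``one-shot'' argument you describe has no way to get started. I would suggest reworking the proof around this iteration; once you run it, the final observation $\psi_m(w)=1$ (since $w$ is upper triangular with vanishing superdiagonal) gives the displayed identity exactly as you anticipated.
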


\begin{proof}  Take $j_1=\begin{pmatrix} I_r & 0 &0 \\ 0&1& x_1 \\ 0&0&I_r \end{pmatrix}\in J_m$
 with $x_1=(x_{11},...,x_{1r})$.
As $\pi(j_1).v_{m}=\psi_m(j_1)v_m$, we find

\begin{eqnarray*}
&&\psi(x_{11})W_{v_m}(g)=W_{v_m}(gj_1) \\
&=&W_{v_m}\left( \begin{pmatrix} 0&a \\ b\omega_{r} &0
\end{pmatrix}\begin{pmatrix} u_{r}& 0 &u_{r}\omega_{r}a'u \\ 0&1&0
\\ 0&0& I_{r}  \end{pmatrix}j_1 \right) \\
&=&W_{v_m}\left( \begin{pmatrix} 0&a \\ b\omega_{r} &0
\end{pmatrix}j_1 \begin{pmatrix} u_{r}& 0 &u_{r}\omega_{r}a'u \\
0&1&0 \\ 0&0& I_{r}  \end{pmatrix}  \right)\\
&=&W_{v_m}\left(\begin{pmatrix} 1& x'_1&0 \\ 0&I_r&0 \\ 0&0&I_r
\end{pmatrix}  \begin{pmatrix} 0&a \\ b\omega_{r} &0
\end{pmatrix}\begin{pmatrix} u_{r}& 0 &u_{r}\omega_{r}a'u \\ 0&1&0
\\ 0&0& I_{r}  \end{pmatrix} \right) \\
&=&\psi(a_1a_2^{-1}x_{11})W_{v_m}(g),
\end{eqnarray*}
where $x'_1=(a_1a_2^{-1}x_{11},...,a_1a_{r+1}^{-1}x_{1r} )$.

Since $W_{v_m}(g)\neq 0$, we have $\psi(x_{11}(1-a_1a_2^{-1}))=1$.
As $x_{11}\in \mathfrak{p}^{-m}$ is arbitrary, we get
$1-a_1a_2^{-1}\in \mathfrak{p}^m$, which means $a_1a_2^{-1}\in
1+\mathfrak{p}^m$.

Let $j'_1=\begin{pmatrix} I_r &0&0 \\ y_1&1&0 \\ 0&0&I_r
\end{pmatrix} \in J_m$ with $y_1=(y_{11},...,y_{1r})$. Then we have

\begin{eqnarray*}
&& W_{v_m}(g)=W_{v_m}(gj'_1) \\
&=& W_{v_m}\left( \begin{pmatrix}
0&a
\\ b\omega_{r} &0
\end{pmatrix}\begin{pmatrix} u_{r}& 0 &u_{r}\omega_{r}a'u \\ 0&1&0
\\ 0&0& I_{r}  \end{pmatrix}j'_1 \right) \\
&=& W_{v_m}\left( \begin{pmatrix} 0&a \\ b\omega_{r} &0
\end{pmatrix} \begin{pmatrix} I_r&0&0 \\  y_1u_r^{-1}&1&
-y_1\omega_ra'u \\ 0&0&I_r   \end{pmatrix} \begin{pmatrix} u_{r}& 0
&u_{r}\omega_{r}a'u \\ 0&1&0 \\ 0&0& I_{r}  \end{pmatrix} \right) \\
&=& W_{v_m} \left( \begin{pmatrix} 1&y'_1&* \\ 0&I_r&0 \\ 0&0&I_r
\end{pmatrix}  \begin{pmatrix} 0&a \\ b\omega_{r} &0
\end{pmatrix}\begin{pmatrix} u_{r}& 0 &u_{r}\omega_{r}a'u \\ 0&1&0
\\ 0&0& I_{r}  \end{pmatrix} \right) \\ &=& \psi(a_1a_2^{-1}
y_{1r}a'_1)W_{v_m}(g),
\end{eqnarray*}
where $y'_1=(a_1a_2^{-1}y_{1r}a'_1,... )$.

Since $W_{v_m}(g)\neq 0$, we have $\psi(a_1a_2^{-1} y_{1r}a'_1)=1$.
As $a_1a_2^{-1}\in 1+\mathfrak{p}^m$, and $y_{1r}\in
\mathfrak{p}^{3m}$ is arbitrary, we find $a'_1\in
\mathfrak{p}^{-3m}$.

Write
\begin{eqnarray*}
g&=& \begin{pmatrix} 0&a \\ b\omega_{r} &0
\end{pmatrix}\begin{pmatrix} u_{r}& 0 &0 \\ 0&1&0 \\ 0&0& I_{r}
\end{pmatrix}\begin{pmatrix} I_{r}& 0 &\omega_{r}a'u \\ 0&1&0 \\
0&0& I_{r}  \end{pmatrix} \\ &=&  \begin{pmatrix} 0&a \\ b\omega_{r}
&0 \end{pmatrix}\begin{pmatrix} u_{r}& 0 &0 \\ 0&1&0 \\ 0&0& I_{r}
\end{pmatrix}\begin{pmatrix} I_{r}& 0 & u_1' \\ 0&I_2&0 \\ 0&0&
I_{r-1}  \end{pmatrix}\begin{pmatrix} I_{r-1}& 0 & 0 &0&0 \\ 0&1&0
&a'_1&0 \\ 0&0&1&0&0 \\ 0&0&0&1&0 \\ 0& 0&0&0&I_{r-1}
\end{pmatrix},
\end{eqnarray*}
where the matrix $(0,u'_1)=\omega_ra'u-\begin{pmatrix} 0&0 \\ a'_1&0
\end{pmatrix}$.

Note that the last matrix belongs to $J_m$, thus if we set
\[
g_1= \begin{pmatrix} 0&a \\ b\omega_{r} &0 \end{pmatrix}
\begin{pmatrix} u_{r}& 0 &0 \\ 0&1&0 \\ 0&0& I_{r}  \end{pmatrix}
\begin{pmatrix} I_{r}& 0 & u_1' \\ 0&I_2&0 \\ 0&0& I_{r-1}  \end{pmatrix},
\]
we have
\[
W_{v_m}(g)=W_{v_m}\left(  \begin{pmatrix} 0&a \\ b\omega_{r} &0
\end{pmatrix}
\begin{pmatrix} u_{r}& 0 &0 \\ 0&1&0 \\ 0&0& I_{r}  \end{pmatrix}
\begin{pmatrix} I_{r}& 0 & u_1' \\ 0&I_2&0 \\ 0&0& I_{r-1}  \end{pmatrix}
\right)=W_{v_m}(g_1).
\]

Take $j_2=\begin{pmatrix} I_{r+1} & 0 &0 \\ 0&1& x_2 \\ 0&0&I_{r-1}
\end{pmatrix}\in J_m$ with $x_2=(x_{21},...,x_{2,r-1})$. Then we
have
\begin{eqnarray*}
&&\psi(x_{21})W_{v_m}(g)=\psi(x_{21})W_{v_m}(g_1)
=\psi(x_{21})W_{v_m}\left(  \begin{pmatrix} 0&a \\ b\omega_{r} &0
\end{pmatrix}\begin{pmatrix} u_{r}& 0 &0 \\ 0&1&0 \\ 0&0& I_{r}
\end{pmatrix}\begin{pmatrix} I_{r}& 0 & u_1' \\ 0&I_2&0 \\ 0&0&
I_{r-1}  \end{pmatrix} \right) \\
&=&W_{v_m}\left(  \begin{pmatrix} 0&a \\ b\omega_{r} &0
\end{pmatrix}\begin{pmatrix} u_{r}& 0 &0 \\ 0&1&0 \\ 0&0& I_{r}
\end{pmatrix}\begin{pmatrix} I_{r}& 0 & u_1' \\ 0&I_2&0 \\ 0&0&
I_{r-1}  \end{pmatrix} j_2 \right) \\
&=&W_{v_m}\left(  \begin{pmatrix} 0&a \\ b\omega_{r} &0
\end{pmatrix}\begin{pmatrix} u_{r}& 0 &0 \\ 0&1&0 \\ 0&0& I_{r}
\end{pmatrix}j_2 \begin{pmatrix} I_{r}& 0 & u_1' \\ 0&I_2&0 \\ 0&0&
I_{r-1}  \end{pmatrix} \right)\\
&=&W_{v_m}\left(  \begin{pmatrix} 0&a \\ b\omega_{r} &0
\end{pmatrix}j_2 \begin{pmatrix} u_{r}& 0 &0 \\ 0&1&0 \\ 0&0& I_{r}
\end{pmatrix}\begin{pmatrix} I_{r}& 0 & u_1' \\ 0&I_2&0 \\ 0&0&
I_{r-1}  \end{pmatrix} \right)\\
%\end{eqnarray*}
%\begin{eqnarray*}
&=&W_{v_m}\left(\begin{pmatrix} 1& 0&0&0 \\ 0&1&x'_2&0 \\
0&0&I_{r-1}&0 \\ 0&0&0&I_r  \end{pmatrix}  \begin{pmatrix} 0&a \\
b\omega_{r} &0 \end{pmatrix}\begin{pmatrix} u_{r}& 0 &0 \\ 0&1&0 \\
0&0& I_{r}  \end{pmatrix}\begin{pmatrix} I_{r}& 0 & u_1' \\ 0&I_2&0
\\ 0&0& I_{r-1}  \end{pmatrix}  \right) \\
&=&\psi(a_2a_3^{-1}x_{21})W_{v_m}\left(  \begin{pmatrix} 0&a \\
b\omega_{r} &0 \end{pmatrix}\begin{pmatrix} u_{r}& 0 &0 \\ 0&1&0 \\
0&0& I_{r}  \end{pmatrix}\begin{pmatrix} I_{r}& 0 & u_1' \\ 0&I_2&0
\\ 0&0& I_{r-1}  \end{pmatrix} \right)\\ &=&
\psi(a_2a_3^{-1}x_{21})W_{v_m}(g_1)=\psi(a_2a_3^{-1}x_{21})W_{v_m}(g),
\end{eqnarray*}
where $x'_2=(a_2a_3^{-1}x_{21},...,a_2a_{r+1}^{-1}x_{2,r-1} )$. \\

As $W_{v_m}(g)\neq 0$ and $x_{21}\in \mathfrak{p}^{-m}$ is
arbitrary, we must have $1-a_2a_3^{-1}\in \mathfrak{p}^m$, that is,
$a_2a_3^{-1}\in 1+\mathfrak{p}^m$. \\

Now take $j'_2=\begin{pmatrix} I_{r}&0&0&0 \\ 0&1&0&0 \\ y_2&0&1&0
\\ 0&0&0&I_{r-1} \end{pmatrix}\in J_m$ with
$y_2=(y_{21},...,y_{2r})$. Similarly as above, we have

\begin{eqnarray*}
&& W_{v_m}(g)=W_{v_m}(g_1)=W_{v_m}(g_1j'_2) \\
&=& W_{v_m}\left(   \begin{pmatrix} 0&a \\ b\omega_{r} &0
\end{pmatrix}\begin{pmatrix} u_{r}& 0 &0 \\ 0&1&0 \\ 0&0& I_{r}
\end{pmatrix}\begin{pmatrix} I_{r}& 0 & u_1' \\ 0&I_2&0 \\ 0&0&
I_{r-1}  \end{pmatrix} j'_2 \right) \\
\end{eqnarray*}
\begin{eqnarray*}
&=&W_{v_m}\left(  \begin{pmatrix} 0&a \\ b\omega_{r} &0
\end{pmatrix} \begin{pmatrix} u_{r}& 0 &0 \\ 0&1&0 \\ 0&0& I_{r}
\end{pmatrix} \begin{pmatrix} I_r&0&0&0 \\ 0&1&0&0 \\
y_2&0&1&-y_2u'_1 \\ 0&0&0&I_{r-1}   \end{pmatrix}  \begin{pmatrix}
I_{r}& 0 & u_1' \\ 0&I_2&0 \\ 0&0& I_{r-1}  \end{pmatrix} \right) \\
&=& W_{v_m}\left( \begin{pmatrix} 0&a \\ b\omega_{r} &0
\end{pmatrix} \begin{pmatrix} I_r&0&0&0 \\ 0&1&0&0 \\
y_2u_r^{-1}&0&1& -y_2u'_1 \\ 0&0&0&I_{r-1}        \end{pmatrix}
\begin{pmatrix} u_{r}& 0 &0 \\ 0&1&0 \\ 0&0& I_{r}  \end{pmatrix}
\begin{pmatrix} I_{r}& 0 & u_1' \\ 0&I_2&0 \\ 0&0& I_{r-1}
\end{pmatrix} \right) \\
&=& W_{v_m}\left(\begin{pmatrix} 1&0&0&0 \\ 0&1& y'_2 &*\\
0&0&I_{r-1}&0 \\ 0&0&0&I_{r} \end{pmatrix} \begin{pmatrix} 0&a \\
b\omega_{r} &0 \end{pmatrix} \begin{pmatrix} u_{r}& 0 &0 \\ 0&1&0 \\
0&0& I_{r}  \end{pmatrix} \begin{pmatrix} I_{r}& 0 & u_1' \\ 0&I_2&0
\\ 0&0& I_{r-1}  \end{pmatrix} \right) \\ &=&
\psi(-a_2a_3^{-1}(y_{2r}a'_1u_{12}+y_{2,r-1}a'_2
))W_{v_m}(g_1)=\psi(-a_2a_3^{-1}(y_{2r}a'_1u_{12}+y_{2,r-1}a'_2
))W_{v_m}(g),
\end{eqnarray*}
where $y'_2=(-a_2a_3^{-1}(y_{2r}a'_1u_{12}+y_{2,r-1}a'_2 ),...)$.

Again, as $W_{v_m}(g)\neq 0$, $a_2a_3^{-1}\in 1+\mathfrak{p}^m$ and
$y_{2,r-1}\in \mathfrak{p}^{7m}, y_{2r}\in \mathfrak{p}^{5m}$ are
arbitrary, we get $a'_2\in \mathfrak{p}^{-7m}, a'_1u_{12}\in
\mathfrak{p}^{-5m}$, thus it follows that
\[
g_1=\begin{pmatrix} 0&a \\ b\omega_{r} &0 \end{pmatrix}
\begin{pmatrix} u_{r}& 0 &0 \\ 0&1&0 \\ 0&0& I_{r}  \end{pmatrix}
\begin{pmatrix} I_{r}& 0 & u_1' \\ 0&I_2&0 \\ 0&0& I_{r-1}  \end{pmatrix}
\]
\[
=\begin{pmatrix} 0&a \\ b\omega_{r} &0 \end{pmatrix}
\begin{pmatrix} u_{r}& 0 &0 \\ 0&1&0 \\ 0&0& I_{r}  \end{pmatrix}
\begin{pmatrix} I_{r}& 0 & u_2' \\ 0&I_3&0 \\ 0&0& I_{r-2}  \end{pmatrix}
\begin{pmatrix} I_{r-2}&0&0&0&0&0 \\ 0& 1&0& 0 &a'_2&0 \\ 0&0&1&0&a'_1u_{12}&0 \\
0&0&0&I_2&0&0 \\ 0& 0&0&0&1&0\\ 0&0&0&0&0& I_{r-2}  \end{pmatrix},
\]
where $u'_1=\begin{pmatrix} 0&u'_2  \end{pmatrix} +\begin{pmatrix}
0&0 \\ a'_2&0 \\ a'_1u_{12}&0 \end{pmatrix}$.

Note that the last matrix belongs to $J_m$. So we find
\[
W_{v_m}(g)=W_{v_m}(g_1)=W_{v_m}(g_2)
\]
if we set $g_2=\begin{pmatrix} 0&a \\ b\omega_{r} &0
\end{pmatrix}\begin{pmatrix} u_{r}& 0 &0 \\ 0&1&0 \\ 0&0& I_{r}
\end{pmatrix}\begin{pmatrix} I_{r}& 0 & u_2' \\ 0&I_3&0 \\ 0&0&
I_{r-2}  \end{pmatrix}$. \\

Now take $j_3=\begin{pmatrix} I_{r+2}&0&0 \\ 0&1&x_3 \\ 0&0&I_{n-2}
\end{pmatrix},j'_3=\begin{pmatrix}
I_{r}&&&&\\&1&&&\\&&1&&\\y_3&&&1&\\&&&&1  \end{pmatrix} ...\in J_m$,
and then argue as above inductively, eventually, we will find
$a_3a_4^{-1}, ...,a_{r}a_{r+1}^{-1}\in 1+\mathfrak{p}^m$,
$\begin{pmatrix} I_r &0& \omega_ra'u \\ 0&1&0 \\ 0&0& I_r
\end{pmatrix} \in J_m$, and
\[
W_{v_m}(g)=W_{v_m}\left( \begin{pmatrix} 0&a \\ b\omega_{r} &0
\end{pmatrix}
\begin{pmatrix} u_{r}& 0 &0 \\ 0&1&0 \\ 0&0& I_{r}  \end{pmatrix}
\right),
\]
which finishes the proof.
\end{proof}

\begin{proposition}
\label{prop3.5} Assume $n=2r+1$.  Let $g=\begin{pmatrix} 0&a
\\ b\omega_{r} &0 \end{pmatrix}\begin{pmatrix} u_{r}& 0
&u_{r}\omega_{r}a'u \\ 0&1&0 \\ 0&0& I_{r}  \end{pmatrix}$, with
$a=diag(a_1,...,a_{r+1})\in A_{r+1}$, $a'=diag(a'_1,...,a'_{r})\in
A_r$, $b=diag(b_1,...,b_{r})\in A_{r}$, $u_r\in N_r, u=(u_{ij})\in
N_r$,. $W_{v_m}$ is the normalized Howe vector. Let
$u_1=\begin{pmatrix} u_{r}& 0 &0 \\ 0&1&0 \\ 0&0& I_{r}
\end{pmatrix}, u_2=\omega_n{^t}u_1\omega_n$. If $W_{v_m}(g)\neq
0$,  then
\[
\overline{W}_{v_m}(u_2g)=W_{v_m}(g^{-1}u_2^{-1}).
\]
\end{proposition}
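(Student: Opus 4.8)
The plan is to push both sides to $W_{v_m}$ evaluated at two matrices that differ only by an element of $J_m$ on which $\psi_m$ is trivial, with the congruences from Proposition~\ref{prop3.4} absorbing the difference. Put $\beta=\begin{pmatrix}0&a\\ b\omega_r&0\end{pmatrix}$ and $v=\begin{pmatrix}I_r&0&\omega_ra'u\\ 0&1&0\\ 0&0&I_r\end{pmatrix}$, so that $g=\beta u_1 v$; since $W_{v_m}(g)\neq 0$, Proposition~\ref{prop3.4} gives $v\in J_m$ and $a_i/a_{i+1}\in 1+\mathfrak{p}^m$ for $1\le i\le r$. Let $\sigma(h)=\omega_n\,{}^th^{-1}\omega_n$ be the automorphism of $G_n$ occurring in Proposition~\ref{prop3.3}; it preserves $N_n$ and $J_m$ and carries $\psi$ to $\psi^{-1}$ and $\psi_m$ to $\psi_m^{-1}$. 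A direct check shows $u_2=\begin{pmatrix}I_r&&\\&1&\\&&\widetilde{u}_r\end{pmatrix}$ with $\widetilde{u}_r=\omega_r\,{}^tu_r\omega_r\in N_r$, $\sigma(u_1)=u_2^{-1}$, $\psi(v)=1$ (hence $\psi_m(v)=1$), and $\psi(u_1)=\psi(u_2)$ — the last because both equal $\psi(u_r)$, using that $X\mapsto \omega_r\,{}^tX\omega_r$ preserves $\psi$ on $N_r$.

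For the right-hand side, $g^{-1}u_2^{-1}=v^{-1}u_1^{-1}\beta^{-1}u_2^{-1}=v^{-1}u_1^{-1}\begin{pmatrix}0&\omega_rb^{-1}\widetilde{u}_r^{-1}\\ a^{-1}&0\end{pmatrix}$, and since $v^{-1}u_1^{-1}\in N_n$ with $\psi(v^{-1}u_1^{-1})=\psi(u_1)^{-1}$,
\[
W_{v_m}(g^{-1}u_2^{-1})=\psi(u_1)^{-1}\,W_{v_m}\!\left(\begin{pmatrix}0&\omega_rb^{-1}\widetilde{u}_r^{-1}\\ a^{-1}&0\end{pmatrix}\right).
\]
For the left-hand side, $\overline{W}_{v_m}(u_2g)=\psi^{-1}(u_2)\,\overline{W}_{v_m}(g)=\psi(u_2)^{-1}\,W_{v_m}(\sigma(g))$ by Proposition~\ref{prop3.3}, and $\sigma(g)=\sigma(\beta)\,\sigma(u_1)\,\sigma(v)=\sigma(\beta)\,u_2^{-1}\,\sigma(v)$ with $\sigma(v)\in J_m$ and $\psi_m(\sigma(v))=\psi_m(v)^{-1}=1$, so $W_{v_m}(\sigma(g))=W_{v_m}(\sigma(\beta)u_2^{-1})$. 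The key computation is the identity
\[
\sigma(\beta)\,u_2^{-1}=\begin{pmatrix}0&\omega_rb^{-1}\widetilde{u}_r^{-1}\\ \widehat{a}&0\end{pmatrix},\qquad \widehat{a}=\diag(a_{r+1}^{-1},\dots,a_1^{-1}),
\]
which one obtains from ${}^t\beta^{-1}=\begin{pmatrix}0&a^{-1}\\ b^{-1}\omega_r&0\end{pmatrix}$ by conjugating with $\omega_n$ and right-multiplying by $u_2^{-1}$. Writing $\widehat{a}=a^{-1}\varepsilon$ with $\varepsilon=\diag(a_1/a_{r+1},\dots,a_{r+1}/a_1)$, each entry of $\varepsilon$ lies in $1+\mathfrak{p}^m$ because $a_i/a_{i+1}\in 1+\mathfrak{p}^m$ for all $i$, so $\begin{pmatrix}\varepsilon&\\&I_r\end{pmatrix}\in A_{n,m}\subset J_m$ with $\psi_m$-value $1$; hence $W_{v_m}(\sigma(\beta)u_2^{-1})=W_{v_m}\!\left(\begin{pmatrix}0&\omega_rb^{-1}\widetilde{u}_r^{-1}\\ a^{-1}&0\end{pmatrix}\right)$. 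Combining the two displays with $\psi(u_1)=\psi(u_2)$ gives $\overline{W}_{v_m}(u_2g)=W_{v_m}(g^{-1}u_2^{-1})$.

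I expect the delicate step to be the matrix identity $\sigma(\beta)u_2^{-1}=\begin{pmatrix}0&\omega_rb^{-1}\widetilde{u}_r^{-1}\\ \widehat{a}&0\end{pmatrix}$. Because $n=2r+1$ is odd, the $(r+1,r)$-block decomposition natural for $\beta$ is offset from the $(r,1,r)$-block decomposition adapted to $\omega_n$ and $u_2$, and the net effect of applying $\sigma$ is precisely to reverse the diagonal of $a^{-1}$; tracking which diagonal entry ends up where, and recognizing that this reversal is exactly the perturbation neutralized by $a_i/a_{i+1}\in 1+\mathfrak{p}^m$, is where the care is needed. The remaining steps are formal consequences of the equivariance properties of Howe vectors and of the identity in Proposition~\ref{prop3.3}.
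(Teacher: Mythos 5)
Your argument is correct and follows essentially the same route as the paper: reduce to $g'=\beta u_1$ via Proposition \ref{prop3.4}, convert $\overline{W}_{v_m}$ to $W_{v_m}\circ\sigma$ via Proposition \ref{prop3.3}, and observe that $\sigma(\beta)u_2^{-1}$ and $\beta^{-1}u_2^{-1}$ differ only by a diagonal element of $J_m$ on which $\psi_m$ is trivial, thanks to $a_i/a_{i+1}\in 1+\mathfrak{p}^m$. The only cosmetic difference is that the paper factors out the central entry $a_{r+1}$ and matches the two sides through $\omega_\pi(a_{r+1}^{-1})$ and the elements $a'',a'''\in J_m$, whereas you absorb the diagonal reversal directly through $\varepsilon=a\widehat{a}\in A_{n,m}$ and the extra (correct) observation $\psi(u_1)=\psi(u_2)$.
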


\begin{proof}  By Proposition \ref{prop3.4}, if $W_{v_m}(g)\neq 0$,
then $a_ia_{i+1}^{-1}\in 1+\mathfrak{p}^m, i=1,2,...,r$,
$\begin{pmatrix} I_r &0& \omega_ra'u \\ 0&1&0 \\ 0&0& I_r
\end{pmatrix} \in J_m$ and
\[
W_{v_m}(g)=W_{v_m}(g'),
\]
where $g'=\begin{pmatrix} 0&a \\ b\omega_{r} &0 \end{pmatrix}u_1$.
Note that $W_{v_m}(g^{-1}u_2^{-1})=W_{v_m}(g'^{-1}u_2^{-1})$. Hence
it suffices to prove the proposition for $g'$, that is,
\[
\overline{W}_{v_m}(u_2g')=W_{v_m}(g'^{-1}u_2^{-1}).
\]

By Proposition \ref{prop3.3}, we have
\begin{eqnarray*}
&& \overline{W}_{v_m}(u_2g')=W_{v_m}(\omega_n {^t}u_2^{-1} {^t}g'^{-1} \omega_n) \\
&=& W_{v_m}\left(  u_1^{-1}\omega_n \begin{pmatrix} 0& a^{-1} \\
b^{-1}\omega_r&0 \end{pmatrix}  {^t}u_1^{-1} \omega_n \right) \\
&=& W_{v_m} \left( u_1^{-1} \begin{pmatrix} 0& \omega_rb^{-1} \\
\omega_{r+1}a^{-1}\omega_{r+1}&0 \end{pmatrix} u_2^{-1}  \right) \\
&=& \psi(u_1^{-1})W_{v_m}\left( \begin{pmatrix} &
\omega_rb^{-1}a_{r+1}\\ I_{r+1}& \end{pmatrix} \begin{pmatrix}
\omega_{r+1}a^{-1}\omega_{r+1}& \\ & a_{r+1}^{-1}I_r \end{pmatrix}
\begin{pmatrix} I_{r+1}& \\ & \omega_r{^t}u_r^{-1}\omega_r
\end{pmatrix}  \right) \\
&=&  \psi(u_1^{-1})W_{v_m}\left( \begin{pmatrix} &
\omega_rb^{-1}a_{r+1}\\ I_{r+1}& \end{pmatrix}  \begin{pmatrix}
I_{r+1}& \\ & \omega_r{^t}u_r^{-1}\omega_r \end{pmatrix}
\begin{pmatrix} \omega_{r+1}a^{-1}\omega_{r+1}& \\ & a_{r+1}^{-1}I_r
\end{pmatrix} \right) \\
&=& \psi(u_1^{-1})\omega_{\pi}(a_{r+1}^{-1})  W_{v_m}\left(
\begin{pmatrix} & \omega_rb^{-1}a_{r+1}\\ I_{r+1}& \end{pmatrix}
\begin{pmatrix} I_{r+1}& \\ & \omega_r{^t}u_r^{-1}\omega_r
\end{pmatrix} a'' \right), \\
\end{eqnarray*}
where $a''=diag(1,a_{r+1}a_r^{-1},...,a_{r+1}a_1^{-1},1,...,1 )$.

Since $a_ia_{i+1}^{-1}\in 1+\mathfrak{p}^m$ for $i=1,2,...,r$, we
then have $a_{r+1}a_i^{-1}\in 1+\mathfrak{p}^m$ for $i=1,...,r$. It
follows that $a''\in J_m$, and we find
\[
\overline{W}_{v_m}(u_2g')= \psi(u_1^{-1})\omega_{\pi}(a_{r+1}^{-1})
W_{v_m}\left( \begin{pmatrix} & \omega_rb^{-1}a_{r+1}\\ I_{r+1}&
\end{pmatrix}
\begin{pmatrix} I_{r+1}& \\ & \omega_r{^t}u_r^{-1}\omega_r \end{pmatrix}  \right) \ \ \ \ \ \ \cdots
(3).
\]

On the other hand,
\begin{eqnarray*}
&& W_{v_m}(g'^{-1}u_2^{-1})=W_{v_m}\left(u_1^{-1} \begin{pmatrix} 0&
\omega_rb^{-1} \\ a^{-1}&0  \end{pmatrix}u_2^{-1} \right) \\
&=& \psi(u_1^{-1})W_{v_m}\left( \begin{pmatrix} &
\omega_rb^{-1}a_{r+1}\\ I_{r+1}& \end{pmatrix} \begin{pmatrix}
a^{-1}& \\ & a_{r+1}^{-1}I_r \end{pmatrix} \begin{pmatrix} I_{r+1}&
\\ & \omega_r{^t}u_r^{-1}\omega_r \end{pmatrix}  \right) \\
&=&  \psi(u_1^{-1})W_{v_m}\left( \begin{pmatrix} &
\omega_rb^{-1}a_{r+1}\\ I_{r+1}& \end{pmatrix}  \begin{pmatrix}
I_{r+1}& \\ & \omega_r{^t}u_r^{-1}\omega_r \end{pmatrix}
\begin{pmatrix} a^{-1}& \\ & a_{r+1}^{-1}I_r \end{pmatrix} \right)
\\
&=& \psi(u_1^{-1})\omega_{\pi}(a_{r+1}^{-1})  W_{v_m}\left(
\begin{pmatrix} & \omega_rb^{-1}a_{r+1}\\ I_{r+1}& \end{pmatrix}
\begin{pmatrix} I_{r+1}& \\ & \omega_r{^t}u_r^{-1}\omega_r
\end{pmatrix} a'''\right), \\
\end{eqnarray*}
where $a'''=diag(a_{r+1}a_1^{-1},...,a_{r+1}a_r^{-1},1,1,...,1)$.

Similarly, $a'''\in J_m$, and we get
\[
W_{v_m}(g'^{-1}u_2^{-1})=\psi(u_1^{-1})\omega_{\pi}(a_{r+1}^{-1})
W_{v_m}\left( \begin{pmatrix} & \omega_rb^{-1}a_{r+1}\\ I_{r+1}&
\end{pmatrix}
\begin{pmatrix} I_{r+1}& \\ & \omega_r{^t}u_r^{-1}\omega_r \end{pmatrix} \right)  \ \ \ \ \ \ \cdots
(4).
\]

Compare (3) and (4), we find
\[
\overline{W}_{v_m}(u_2g')=W_{v_m}(g'^{-1}u_2^{-1})
\]
and the proposition follows.
\end{proof}

We will record the following analog property of $W_{v_m}$ on the big
Bruhat cell though we don't need it in the present paper.

\begin{proposition}
\label{prop3.6} For $g=u_1\omega_na u_2\in N_n\omega_nA_nN_n$, let
$u=\omega_n {^tu_2}\omega_n u_1^{-1}$, then
\[
\overline{W}_{v_m}(ug)=W_{v_m}(g^{-1}u^{-1}).
\]
\end{proposition}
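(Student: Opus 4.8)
The plan is to deduce this immediately from Proposition \ref{prop3.3}, which says that $\overline{W}_{v_m}(x)=W_{v_m}(\omega_n\,{}^tx^{-1}\omega_n)$ for every $x\in G_n$. Applying this with $x=ug$, the proposition reduces to the single assertion that
\[
\omega_n\,{}^t(ug)^{-1}\omega_n=(ug)^{-1}.
\]
Using $\omega_n^2=I_n$ and ${}^t\omega_n=\omega_n$, this is equivalent to ${}^t(ug)=\omega_n(ug)\omega_n$, i.e. to the statement that $ug$ is symmetric about the anti-diagonal. So the whole proof comes down to verifying this symmetry, and the only tools needed are the definition of $u$, the decomposition $g=u_1\omega_n a u_2$, and transpose bookkeeping.

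First I would simplify the product $ug$. Substituting $u=\omega_n\,{}^tu_2\,\omega_n\,u_1^{-1}$ and $g=u_1\omega_n a u_2$, the factor $u_1^{-1}u_1$ cancels and $\omega_n\omega_n=I_n$, so that
\[
ug=\omega_n\,{}^tu_2\,\omega_n\,u_1^{-1}u_1\,\omega_n\,a\,u_2=\omega_n\,{}^tu_2\,a\,u_2 .
\]
In particular both unipotent factors $u_1$ drop out entirely, and only $u_2$ and $a$ survive.

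Next I would check the anti-diagonal symmetry of $\omega_n\,{}^tu_2\,a\,u_2$. Transposing and using ${}^t\omega_n=\omega_n$, ${}^ta=a$ and ${}^t({}^tu_2)=u_2$ gives ${}^t(ug)={}^tu_2\,a\,u_2\,\omega_n$, while on the other hand $\omega_n(ug)\omega_n=\omega_n^2\,{}^tu_2\,a\,u_2\,\omega_n={}^tu_2\,a\,u_2\,\omega_n$; hence ${}^t(ug)=\omega_n(ug)\omega_n$, which is exactly what was required. Feeding this back through Proposition \ref{prop3.3} yields
\[
\overline{W}_{v_m}(ug)=W_{v_m}\!\big(\omega_n\,{}^t(ug)^{-1}\omega_n\big)=W_{v_m}\!\big((ug)^{-1}\big)=W_{v_m}(g^{-1}u^{-1}),
\]
which is the claim.

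I do not expect a genuine obstacle here. Unlike Propositions \ref{prop3.4} and \ref{prop3.5}, where the relevant group elements lie outside the big Bruhat cell and one has to exploit the congruences $a_ia_{i+1}^{-1}\in 1+\mathfrak{p}^m$ to absorb correction matrices into $J_m$, in the present statement $g$ lies in $N_n\omega_nA_nN_n$ by hypothesis and the identity is a purely formal consequence of Proposition \ref{prop3.3} together with the bookkeeping above; no Howe-vector congruences and no appeal to Lemma \ref{le3.2} are needed. The only points that demand a little care are the transpose identities and the relation $\omega_n^{-1}=\omega_n$.
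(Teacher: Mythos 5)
Your proof is correct and matches the paper's approach exactly: the paper's proof of Proposition~\ref{prop3.6} consists of the single line ``Use Proposition \ref{prop3.3},'' and your argument simply spells out the straightforward algebra (cancellation of $u_1$, anti-diagonal symmetry of $\omega_n\,{}^tu_2\,a\,u_2$) that makes this one-line reduction work.
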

\begin{proof}
Use Proposition \ref{prop3.3}.
\end{proof}

%\textbf{Remark.} The constant $M$ is the same as the one in Theorem
%7.3 in \cite{Baruch:2005}, which depends only on the representation
%$\pi$ and is uniform for all $b$. This is not explicitly stated in
%Theorem 7.1 in \cite{cjs}.

\section{Proof of the Main Result}
In this section, we will prove \textbf{Conjecture 2} which will
imply the local converse conjecture of Jacquet by the results in
\cite{JNS}. We first recall the conjecture as follows.

%\textbf{Conjecture 1.} Let $\pi_1$ and $\pi_2$ be irreducible
%generic smooth representations of $G_n$. Suppose for any integer
%$r$, with $1\leq r \leq [\frac{n}{2}]$, and any irreducible generic
%smooth representation $\rho$ of $G_r$, we have

%$$\gamma(s, \pi_1\times \rho, \psi )= \gamma(s, \pi_2\times \rho,
%\psi )$$ then $\pi_1$ and $\pi_2$ are isomorphic.

%By the work of Dihua Jiang, Chufeng Nien and Shaun Stevens in
%section 2.4, \cite{JNS}, this conjecture has been reduced to the
%following conjecture.

\textbf{Conjecture 2.} Assume $n\ge 2$. Let $\pi_1$ and $\pi_2$ be
irreducible unitarizable and supercuspidal smooth representations of
$GL_n(F)$. Suppose for any integer $r$, with $1\leq r \leq
[\frac{n}{2}]$, and any irreducible generic smooth representation
$\rho$ of $G_r$, we have

$$\gamma(s, \pi_1\times \rho, \psi )= \gamma(s, \pi_2\times \rho,
\psi ),$$ then $\pi_1$ and $\pi_2$ are isomorphic.

%We will prove this conjecture in this section which will imply
%\textbf{Conjecture 1}. Assume $\pi_1,\pi_2$ are unitarizable and
%supercuspidal representations.

From section 3.1, \cite{Ch}, we have the following disjoint
decomposition
\[
G_n=\sqcup_{i=0}^{n-1} N_n\alpha^i P_n,
\]
where $\alpha=\begin{pmatrix} &I_{n-1}\\ 1&  \end{pmatrix}$. We note
that for $1\le i\le n-1$,
\[
^t(\alpha^i)^{-1}=\alpha^i.
\]

The following is Proposition 3.1 in \cite{Ch}.

\begin{proposition}
\label{prop4.1} Let $\pi_1,\pi_2$ be two generic irreducible
representations of $G_n$ with the same central character, and let
$W_1,W_2$ be two Whittaker functions for $\pi_1,\pi_2$ respectively,
which agree on $P_n$. If the local gamma factors
$\gamma(s,\pi_1\times \rho,\psi)=\gamma(s,\pi_2\times \rho,\psi)$
for all irreducible generic smooth representation $\rho$ of $G_i$,
then $W_1,W_2$ agree on $N_n\alpha^i P_n$,
\end{proposition}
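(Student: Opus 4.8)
The plan is to run the cell-by-cell argument of Cogdell--Piatetski-Shapiro and Jeff Chen: turn the equality of $\gamma$-factors into an equality of Rankin--Selberg integrals via the functional equation in Theorem 2.1(3), and then use Lemma 2.3 to upgrade that integral identity to a pointwise identity of Whittaker functions on one further Bruhat-type cell. I would organize this as an induction on $i$ along the decomposition $G_n=\sqcup_{j=0}^{n-1}N_n\alpha^jP_n$. The case $i=0$ is the hypothesis itself, since $N_n\alpha^0P_n=N_nP_n=P_n$. For the inductive step, assume $W_1=W_2$ on $N_n\alpha^jP_n$ for all $j<i$ (in the eventual application one reaches this by invoking the proposition successively for $j=1,\dots,i$), and that $\gamma(s,\pi_1\times\rho,\psi)=\gamma(s,\pi_2\times\rho,\psi)$ for every irreducible generic $\rho$ of $G_i$; I want agreement on $N_n\alpha^iP_n$.

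Two of the steps are purely formal. First, in the integral $I(s,W_\ell,W',j)$ taken with $r=i$, the argument of $W_\ell$, namely $\begin{pmatrix} g&0&0\\ x&I_j&0\\ 0&0&I_{k+1}\end{pmatrix}$ with $g\in G_i$, $x\in M(j\times i)$, $k=n-i-1-j$, always has last row $(0,\dots,0,1)$, hence lies in $P_n$. Therefore $I(s,W_1,W',j)=I(s,W_2,W',j)$ for every admissible $j$, every $W'\in\mathcal{W}(\rho,\psi^{-1})$, and every irreducible generic $\rho$ of $G_i$ (for $\Re s$ large, hence identically) -- this uses only the hypothesis on $P_n$. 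Second, feeding this into the functional equation of Theorem 2.1(3) and cancelling the equal $\gamma$-factors (the factor $\omega_\rho(-1)^{n-1}$ being the same on both sides, and the equality of central characters matching the diagonal bookkeeping later), one obtains
\[
I\bigl(1-s,\pi_1^*(\omega_{n,i})\widetilde W_1,\widetilde W',k\bigr)=I\bigl(1-s,\pi_2^*(\omega_{n,i})\widetilde W_2,\widetilde W',k\bigr)
\]
for all $k\in\{0,\dots,n-i-1\}$, all $W'$, all $\rho$. Writing $U_\ell=\pi_\ell^*(\omega_{n,i})\widetilde W_\ell$ (a Whittaker function for $\pi_\ell^*$, left-$(N_n,\psi^{-1})$-equivariant), using $\widetilde W_\ell(y)=W_\ell(\omega_n{}^ty^{-1})$ together with the matrix identity $\omega_n\omega_{n,i}=\alpha^i\,\mathrm{diag}(\omega_i,I_{n-i})$, one rewrites $U_\ell$ on the matrices occurring above as $W_\ell$ on $\alpha^i$ times explicit elements of $P_n$; the non-open Bruhat cells of the $G_i$-variable and the degenerate $x$-strata get carried into cells $N_n\alpha^jP_n$ with $j<i$, where $W_1=W_2$ already, so they contribute identically to the two sides and may be discarded.

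What remains is to strip the $x$-integration and apply Lemma 2.3. For $k=0$ there is no $x$, and Lemma 2.3 (in its $\psi^{-1}$-form, its hypothesis being met because $U_\ell$ is left-$(N_i,\psi^{-1})$-equivariant in the upper-left block and $\widetilde W'$ ranges over a full Whittaker model as $\rho$ varies) gives $U_1=U_2$ on $\mathrm{diag}(g,I_{n-i})$ for all $g\in G_i$. For $k\ge1$ one inducts on $k$: having the identity on the $x$-degenerate locus (the $(k-1)$-stratum), subtract it off and apply Lemma 2.3 again to the outer $g$-integral on the surviving open $x$-stratum. Translating back through the tilde operation and the left-$N_n$-equivariance of $W_1,W_2$ then yields $W_1=W_2$ on all of $N_n\alpha^iP_n$, closing the induction. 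The main obstacle is exactly this extraction: Lemma 2.3 alone only annihilates the outer $g$-integral, so one must run a careful double induction -- on the cell index $i$ and on the block size $k$ -- repeatedly using the Whittaker-equivariance of $U_\ell$ and of $\widetilde W'$ to localize in the matrix entries and to recognize each degenerate stratum as lying in a lower cell, and then check that the single open stratum left over sweeps out precisely $N_n\alpha^iP_n$ modulo $N_n$ on the left. Everything else -- the reduction to $P_n$ and the functional-equation input -- is formal.
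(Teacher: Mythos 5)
Note first that the paper does not supply a proof of Proposition~4.1: it is quoted verbatim as Proposition~3.1 of \cite{Ch}, so there is no in-paper argument to compare against. Your outline is in the spirit of Chen's proof --- feed the $\gamma$-factor equality into the functional equation of Theorem~2.1(3), observe that the ``$s$'' side integrals depend only on $W_\ell|_{P_n}$ and hence coincide, and then recover pointwise agreement on $N_n\alpha^iP_n$ from the ``$1-s$'' side using the identity $\omega_n\omega_{n,i}=\alpha^i\,\mathrm{diag}(\omega_i,I_{n-i})$ together with the completeness Lemma~2.3 --- and that framework is the right one.

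But the argument as written has a genuine gap, precisely at the point you yourself call ``the main obstacle.'' For $k\geq 1$ the ``$1-s$'' integral carries an inner integration over $x\in M(k\times i)$, and Lemma~2.3 only kills the outer $g$-integral; at best it yields $\int H(g,x)\,dx=0$ for almost all $g$, not $H\equiv 0$. The remedy you propose --- that ``degenerate $x$-strata'' and non-open Bruhat strata of $g$ fall into lower cells $N_n\alpha^jP_n$, $j<i$, and may be discarded --- is false as stated: for instance $x=0$ still gives $\alpha^i$ times an element of $P_n$, which lies squarely in $N_n\alpha^iP_n$ and not in any lower cell, and the same is true of rank-deficient $x$. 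In addition, your outer induction presupposes $W_1=W_2$ on $N_n\alpha^jP_n$ for all $j<i$, which would require $\gamma$-equality for twists by $G_j$ for each $j<i$; these are not among the stated hypotheses (the proposition is per-$i$, assuming only $G_i$-twists), so the inductive framing quietly strengthens the hypotheses. A correct proof must execute the $x$-stripping within a single fixed $i$; one cannot reach the pointwise conclusion by iterating Lemma~2.3 alone, and the ``double induction'' as described does not supply the missing step.
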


\begin{thm}
\label{thm4.2} Assume $n=2r+1$. Let $\pi_1$ and $\pi_2$ be generic
irreducible unitarizable representations of $G_n$. Suppose for any
integer $l$, with $1\leq l \leq [\frac{n}{2}]=r$, and any
irreducible generic smooth representation $\rho$ of $G_l$, we have
\[
\gamma(s, \pi_1\times \rho, \psi )= \gamma(s, \pi_2\times \rho, \psi
).
\]
Let $W^i_{v_m}$ be normalized Howe vectors of $\pi_i, i=1,2$. Then
for any $a\in A_n$, we have
\[
W^1_{v_m}(a\omega_n)=W^2_{v_m}(a\omega_n).
\]
\end{thm}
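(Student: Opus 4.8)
\textbf{Proposal.}

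The plan is to run the Rankin--Selberg functional equation on \(G_n\times G_r\) with a translate of the normalized Howe vector inserted on the \(G_n\)-factor, and to feed into it the fact that, by Chen's Proposition~\ref{prop4.1}, \(W^1_{v_m}\) and \(W^2_{v_m}\) already agree on the low Bruhat cells. I would apply Theorem~2.1 to the pair \(\pi_i^*,\rho\) (with \(\rho\) an arbitrary irreducible generic representation of \(G_r\)) and additive character \(\psi^{-1}\), taking on the \(\pi_i^*\)-side the vector \(\pi_i^*\!\bigl(\left(\begin{smallmatrix}\omega_{2r}&0\\0&1\end{smallmatrix}\right)\alpha^{r+1}a\bigr)\widetilde W^i_{\omega_n\cdot v_m}\), which by Proposition~\ref{prop3.3} is a right translate of the normalized Howe vector of \(\pi_i^*\) with respect to \(\psi_m^{-1}\). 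Using the elementary identities \(\omega_{n,r}\left(\begin{smallmatrix}\omega_{2r}&0\\0&1\end{smallmatrix}\right)\alpha^{r+1}=\left(\begin{smallmatrix}\omega_r&0\\0&I_{r+1}\end{smallmatrix}\right)\), \({}^t(\alpha^{r+1})^{-1}=\alpha^{r+1}\), and \(\widetilde{\widetilde{W^i}}_{\omega_n\cdot v_m}(h)=W^i_{v_m}(h\omega_n)\) (the last because \(\widetilde{\widetilde{\,\cdot\,}}\) is the identity on a Whittaker model), unwinding the functional equation yields exactly the pair of integrals displayed in the introduction, whose right-hand side is
\[
c_\rho\int_{N_r\backslash G_r}W^i_{v_m}\!\left(\begin{pmatrix} g\omega_r & 0\\ 0 & I_{r+1}\end{pmatrix}a^{-1}\omega_n\right)\widetilde{W'}(g)\,|\det g|^{1-s-\frac{r+1}{2}}\,dg ,
\]
with \(c_\rho\) a constant depending only on \(\rho\).

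I would then reduce the theorem to an equality of the two left-hand-side integrands. From the hypothesis and the standard relation \(\gamma(s,\pi\times\sigma,\psi)\,\gamma(1-s,\pi^*\times\sigma^*,\psi^{-1})=1\) one gets \(\gamma(s,\pi_1^*\times\rho,\psi^{-1})=\gamma(s,\pi_2^*\times\rho,\psi^{-1})\) for all irreducible generic \(\rho\) of \(G_r\). Hence, if one shows
\[
\widetilde W^1_{\omega_n\cdot v_m}\!\left(\begin{pmatrix} g&0&0\\ x&I_r&0\\ 0&0&1\end{pmatrix}\begin{pmatrix}\omega_{2r}&0\\0&1\end{pmatrix}\alpha^{r+1}a\right)=\widetilde W^2_{\omega_n\cdot v_m}\!\left(\begin{pmatrix} g&0&0\\ x&I_r&0\\ 0&0&1\end{pmatrix}\begin{pmatrix}\omega_{2r}&0\\0&1\end{pmatrix}\alpha^{r+1}a\right)
\]
on an open dense set of \((g,x)\), then the two left-hand-side integrals coincide for every \(\rho\) and every \(W'\), hence so do the two right-hand-side integrals. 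Setting \(H(g)=W^1_{v_m}\bigl(\left(\begin{smallmatrix} g\omega_r&0\\ 0&I_{r+1}\end{smallmatrix}\right)a^{-1}\omega_n\bigr)-W^2_{v_m}(\cdots)\), one has \(H(ug)=\psi(u)H(g)\) and \(\int_{N_r\backslash G_r}H(g)\widetilde{W'}(g)|\det g|^{1-s-\frac{r+1}{2}}dg=0\) for all \(\widetilde{W'}\in\mathcal W(\rho^*,\psi^{-1})\); this is precisely the setting of Lemma~\ref{le2.3} (after replacing \(s\) by \(1-s\)), so \(H\equiv 0\). Taking \(g=\omega_r\) gives \(W^1_{v_m}(a^{-1}\omega_n)=W^2_{v_m}(a^{-1}\omega_n)\) for all \(a\in A_n\), which is the theorem.

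It remains to prove the displayed equality of the partial Bessel functions \(\widetilde W^i_{\omega_n\cdot v_m}\). First, \(W^1_{v_m}\) and \(W^2_{v_m}\) agree on the mirabolic \(P_n\): any \(p\in P_n\) factors as \(p=n\left(\begin{smallmatrix}h&0\\0&1\end{smallmatrix}\right)\) with \(n\in N_n\), \(h\in G_{n-1}\), and by Lemma~\ref{le3.2} the value \(W^i_{v_m}\left(\begin{smallmatrix}h&0\\0&1\end{smallmatrix}\right)\) is given by an explicit formula (equal to \(\psi(u)\) when \(h=u\bar b\in N_{n-1}\bar B_{n-1,m}\) and \(0\) otherwise), independent of \(i\). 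Since the hypothesis with \(l=1\) forces \(\omega_{\pi_1}=\omega_{\pi_2}\), Proposition~\ref{prop4.1} applied for \(l=1,\dots,r\) propagates this agreement to \(\bigsqcup_{l=0}^{r}N_n\alpha^lP_n\); and, as the hypothesis is stable under \(\pi_i\mapsto\pi_i^*\) and \(\widetilde W^i_{\omega_n\cdot v_m}\) is the normalized Howe vector of \(\pi_i^*\) with respect to \(\psi_m^{-1}\) (Proposition~\ref{prop3.3}), the same reasoning gives \(\widetilde W^1_{\omega_n\cdot v_m}=\widetilde W^2_{\omega_n\cdot v_m}\) on \(\bigsqcup_{l=0}^{r}N_n\alpha^lP_n\). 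Finally, for \(g\) in the big cell of \(G_r\) one rewrites the domain element — after pulling out a left \(N_n\)-factor, absorbed by the \(\psi^{-1}\)-equivariance of \(\widetilde W^i_{\omega_n\cdot v_m}\) — in the form \(u_2g_0\), where \(g_0=\left(\begin{smallmatrix}0&a'\\ b'\omega_r&0\end{smallmatrix}\right)\left(\begin{smallmatrix}u_r&0&u_r\omega_r a''u\\ 0&1&0\\ 0&0&I_r\end{smallmatrix}\right)\) is of the shape appearing in Propositions~\ref{prop3.4}--\ref{prop3.5} and \(u_2=\omega_n\,{}^tu_1\omega_n\) with \(u_1=\left(\begin{smallmatrix}u_r&0&0\\ 0&1&0\\ 0&0&I_r\end{smallmatrix}\right)\). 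Then Proposition~\ref{prop3.5} together with Proposition~\ref{prop3.3} gives \(\widetilde W^i_{\omega_n\cdot v_m}(u_2g_0)=W^i_{v_m}(g_0^{-1}u_2^{-1})\), and one checks that \(g_0^{-1}u_2^{-1}\) lies in \(\bigsqcup_{l=0}^{r}N_n\alpha^lP_n\), where \(W^1_{v_m}=W^2_{v_m}\); this yields the required equality of the integrands and completes the proof.

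The hard part will be the last step: producing the factorization of the generic domain element as \(n_0\,u_2\,g_0\) with \(g_0\) exactly in the form to which Propositions~\ref{prop3.4}--\ref{prop3.5} apply, and verifying that the Bruhat position of \(g_0^{-1}u_2^{-1}\) is one of the cells \(N_n\alpha^lP_n\) with \(l\le r\). This is an elementary but intricate matrix computation: one Bruhat-decomposes \(g\) inside \(G_r\), conjugates the resulting unipotent parts past the Weyl elements \(\omega_{2r},\alpha^{r+1},\omega_{n,r},\omega_n\), and must check that every \(\psi\)-value picked up along the way is identical for \(i=1,2\). A secondary subtlety is that Propositions~\ref{prop3.4}--\ref{prop3.5} are conditioned on the non-vanishing of \(W^i_{v_m}\); on the vanishing locus one uses that the support constraints of Proposition~\ref{prop3.4} are the same for \(i=1,2\), reducing the remaining \((g,x)\) via Proposition~\ref{prop3.4} to the mirabolic, where the values of \(W^i_{v_m}\) already coincide by Lemma~\ref{le3.2}.
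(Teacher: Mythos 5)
Your plan follows the paper's route almost step for step: the same Rankin--Selberg functional equation on $G_n\times G_r$ fed with (a translate of) the normalized Howe vector, Proposition~\ref{prop3.3} to identify $\widetilde{W}^i_{\omega_n.v_m}$ as the Howe vector of $\pi_i^*$, the $\gamma$-duality $\gamma(s,\pi\times\sigma,\psi)\gamma(1-s,\pi^*\times\sigma^*,\psi^{-1})=1$, Lemma~\ref{le2.3} to pass from equality of integrals to equality of integrands, and Propositions~\ref{prop3.4}--\ref{prop3.5} together with Chen's Proposition~\ref{prop4.1} to compare the integrands on an open dense set of $(g,x)$. Your remarks about the vanishing locus (using that the support constraints from Proposition~\ref{prop3.4} are the same for $i=1,2$ and that $W^1_{v_m},W^2_{v_m}$ agree on $P_n$ by Lemma~\ref{le3.2}) are a sensible way to handle what the paper treats rather tersely with ``whenever they are nonzero.''

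However, the final step as you state it has a genuine gap. After applying Proposition~\ref{prop3.5} to get $\widetilde W^i_{\omega_n\cdot v_m}(u_2g_0)=W^i_{v_m}(g_0^{-1}u_2^{-1})$, you claim that ``one checks $g_0^{-1}u_2^{-1}\in\bigsqcup_{l=0}^{r}N_n\alpha^lP_n$'' and then invokes the agreement of $W^1_{v_m}$ and $W^2_{v_m}$ on those cells. That containment is not true in general. Writing (as the paper does) the domain element as $h=\omega_n\,{}^tp^{-1}d\,\alpha^{r+1}\omega_n$ with $p\in P_n$ and $d$ central, one gets $h^{-1}u^{-1}=\omega_n\alpha^rd^{-1}\,{}^tp\,\omega_n u^{-1}$; conjugating a cell $N_n\alpha^rP_n$ by $\omega_n$ and applying transpose-inverse does not land back in $\bigsqcup_{l\le r}N_n\alpha^lP_n$ — one picks up $\omega_n\alpha^r\omega_n=\alpha^{r+1}$ and the mirabolic $P_n$ is replaced by the stabilizer of $e_1$, which is a different group. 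The paper sidesteps this precisely by converting back to the contragredient side: it rewrites $W^i_{v_m}(h^{-1}u^{-1})=\omega_{\pi_i}^{-1}(d)\,\widetilde W^i_{\omega_n\cdot v_m}\bigl(\alpha^rp^{-1}\omega_n\,{}^tu\,\omega_n\bigr)$, and observes that $\alpha^rp^{-1}\omega_n\,{}^tu\,\omega_n=\alpha^r\cdot\bigl(\text{element of }P_n\bigr)\in N_n\alpha^rP_n$ cleanly (since $p^{-1}\in P_n$ and $\omega_n\,{}^tu\,\omega_n\in N_n\subset P_n$). It then uses that $\widetilde W^1_{\omega_n\cdot v_m}$ and $\widetilde W^2_{\omega_n\cdot v_m}$ agree on that cell (which you have independently established) together with equality of central characters (Corollary~2.7 of \cite{JNS}, needed for the $\omega_{\pi_i}^{-1}(d)$ factor). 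Since you already proved the agreement of the $\widetilde W^i$ on $\bigsqcup_{l\le r}N_n\alpha^lP_n$, the correct fix is a one-line redirection: replace the claim about $g_0^{-1}u_2^{-1}$ by the identity expressing $W^i_{v_m}(g_0^{-1}u_2^{-1})$ in terms of $\widetilde W^i_{\omega_n\cdot v_m}$ at a point of $N_n\alpha^rP_n$, and add the appeal to equality of central characters to match the scalar $\omega_{\pi_i}^{-1}(d)$.
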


\begin{proof}

By Proposition \ref{prop3.3} , $\widetilde{W}^i_{\omega_n.v_m}$ is
the normalized Howe vector of ${\pi}^*_i$, $i=1,2$.  For any $a\in
A_n$, consider the following Rankin-Selberg integrals
\[
\gamma(s,\pi^*_i\times
\rho,\psi^{-1})\omega_{\rho}(-1)^{2r}\int_{N_r\backslash G_r}
\int_{M_{r\times r}} \widetilde{W}^i_{\omega_n.v_m} \left(
\begin{pmatrix} g&&   \\ x&I_{r}& \\ &&1   \end{pmatrix}
\begin{pmatrix} \omega_{2r} & \\ & 1 \end{pmatrix} \alpha^{r+1}a
\right)W'(g)\cdot
\]
\[
|det(g)|^{s-\frac{r+1}{2}}dxdg=\int
\widetilde{\widetilde{W^i}}_{\omega_n.v_m}\left(
\begin{pmatrix} g& \\ &I_{r+1} \end{pmatrix}\omega_{n,r} \begin{pmatrix} \omega_{2r} & \\ & 1 \end{pmatrix}
\alpha^{r+1}a^{-1}
\right)\widetilde{W'}(g)|det(g)|^{1-s-\frac{r+1}{2}}dg \cdots (5),
\]
where $\rho$ is any generic irreducible smooth representation of
$G_r$.

We first look at left hand side of (5), it equals (we will write
$\gamma(s,\pi^*_i\times \rho,\psi^{-1})$ simply as $\gamma$ to save
space)

\[
\gamma\int_{N_r\backslash G_r} \int_{M_{r\times r}}
\widetilde{W}^i_{\omega_n.v_m}\left(
\begin{pmatrix} g&&   \\ x&I_{r}& \\ &&1   \end{pmatrix}
\begin{pmatrix} \omega_{2r} & \\ & 1 \end{pmatrix} \alpha^{r+1}a \right)W'(g)|det(g)|^{s-\frac{r+1}{2}}dxdg
\]
\[
=\gamma\int_{N_r\backslash G_r} \int_{M_{r\times r}}
{W}^i_{v_m}\left( \omega_n
\begin{pmatrix}  {^t}g^{-1}&  -{^t}g^{-1}{^t}x&   \\  &I_{r}& \\ &&1   \end{pmatrix}
\begin{pmatrix} \omega_{r} & \\ & \omega_{r+1} \end{pmatrix} a^{-1} \omega_n \right)
W'(g)|det(g)|^{s-\frac{r+1}{2}}dxdg
\]
\[
=\gamma\int_{N_r\backslash G_r} \int_{G_r} {W}^i_{v_m}\left(
\omega_n \begin{pmatrix} {^t}g^{-1}& -{^t}g^{-1}{^t}\overline{x}&   \\
&I_{r}&
\\ &&1   \end{pmatrix}
\begin{pmatrix} \omega_{r} & \\ & \omega_{r+1} \end{pmatrix} a^{-1}
\omega_n
\right)W'(g)|det(g)|^{s-\frac{r+1}{2}}|det(\overline{x})|^rd\tilde{x}dg,
\]
where $d\tilde{x}$ denotes the Haar measure on $G_r$. We continue to
get the above equal to
\[
=\gamma\int_{N_r\backslash G_r} \int_{G_r} {W}^i_{v_m} \left(
\omega_n \begin{pmatrix}  {^t}g^{-1}& -{^t}\tilde{x} &   \\
&I_{r}& \\ &&1
\end{pmatrix}
\begin{pmatrix} \omega_{r} & \\ & \omega_{r+1} \end{pmatrix} a^{-1} \omega_n \right)
W'(g)|det(g)|^{s+\frac{r-1}{2}}|det(\tilde{x})|^rd\tilde{x}dg
\]
\[
=\gamma\int_{N_r\backslash G_r} \int_{G_r} {W}^i_{v_m}
\begin{pmatrix} 0&a^{-1}_{r+1}&0 \\a''\omega_r&0&0 \\
-\omega_r{^t}\tilde{x}\omega_ra''\omega_r & 0&
\omega_r{^t}g^{-1}\omega_ra'\omega_r  \end{pmatrix}
W'(g)|det(g)|^{s+\frac{r-1}{2}}|det(\tilde{x})|^rd\tilde{x}dg
\]
if we write $a^{-1}=\begin{pmatrix} a'&& \\ &a^{-1}_{r+1}& \\ &&a''
\end{pmatrix}$ with $a'=diag(a^{-1}_1,...,a^{-1}_r)$,
$a''=diag(a^{-1}_{r+2},...,a^{-1}_{2r+1})\in A_r$. \\

Write ${^t}x=-\omega_rv_r\omega_r cu_r\omega_ra''^{-1}\omega_r$
uniquely, where $v_r,u_r\in N_r, c\in A_r$. Note that ${^t}x$ runs
through an open dense subset of $G_r$, as $u_r,v_r$ run through
$N_r$ and $c$ runs through $A_r$. Then the above integral equals to
\[
\gamma\int_{N_r\backslash G_r} \int_{N_r\times A_r\times N_r}
{W}^i_{v_m} \begin{pmatrix} 0&a^{-1}_{r+1}&0 \\a''\omega_r&0&0 \\
 v_r\omega_r cu_r & 0& \omega_r{^t}g^{-1}\omega_ra'\omega_r  \end{pmatrix}
\]
\[
W'(g)|det(g)|^{s+\frac{r-1}{2}}|det(-ca''^{-1})|^r
\delta(v_r,c,u_r,a'') dgdv_rd^{*}cdu_r \hspace{1.5cm} \cdots (6),
\]
where $\delta(v_r,c,u_r,a'')$ is certain Jacobian as a function of
the indicated variables.

Let $\Omega_r=N_r\omega_rA_rN_r$. Let $N_r'=\{ u_r\in N_r:
\omega_ru_r\omega_r\in \Omega_r  \}$.

\textbf{Claim}: $N_r'$ is open dense in $N_r$.

\textit{proof of the claim}: First observe that the claim is
equivalent to the following
\[
\overline{N}_r\cap \Omega_r \ \ is  \ \ open \ \ dense \ \ in  \ \
\overline{N}_r \hspace{2cm}  \cdots (7)
\]
as $\omega_rN_r'\omega_r=\overline{N}_r\cap \Omega_r$.  \\

To prove (7), in general, if $g=(g_{ij})\in G_r$, then by
Proposition 10.3.6 in \cite{Goldfeld}(Proposition 10.3.6 is over
$\mathbb{R}$, but the proof works equally well over p-adic fields),
$g\in \Omega_r$ if and only if all the bottom left minors are
nonzero, that is,
$g_{r1}\neq 0, det\begin{pmatrix}g_{r-1,1}&g_{r-1,2} \\
g_{r1}&g_{r2}
\end{pmatrix}\neq 0, det\begin{pmatrix} g_{r-2,1}&g_{r-2,2}&g_{r-2,3} \\
g_{r-1,1}&g_{r-1,2}&g_{r-1,3}\\ g_{r1}&g_{r2}&g_{r2}
\end{pmatrix}\neq 0,..., det(g)\neq 0$.

Thus the complements of $\overline{N}_r\cap \Omega_r$ in
$\overline{N}_r$ is a union of finitely many closed subvarieties
with strictly smaller dimensions than $\overline{N}_r$, thus we
obtain (7) and the claim follows.

\hspace{15cm} $\Box$

Let's continue the proof of the theorem. For any $c\in A_r$, the set
\[
N_r\omega_r cu_r \omega_r A_rN_r=\Omega_r
\]
is open dense in $G_r$ if $u_r\in N_r'$, which implies the set
\[
\omega_rN_r\omega_r cu_r \omega_r A_rN_r=\omega_r\Omega_r
\]
is also open dense in $G_r$ if $u_r\in N_r'$. Hence if $u_r\in
N_r'$, the subset of cosets
\[
\Omega_r':= \overline{N}_r\cdot cu_r\omega_rA_rN_r=\omega_r\Omega_r
\]
is open dense in $\overline{N}_r\backslash G_r$.

Since $g\in N_r\backslash G_r$ if and only if $^t{g}^{-1}\in
\overline{N}_r\backslash G_r$, and $\Omega_r'$ is open dense in the
latter space, combining with the claim, we can rewrite the integral
(6) as

\[
\gamma\int_{N_r\backslash G_r} \int_{N_r\times A_r\times N_r'}
{W}^i_{v_m} \begin{pmatrix} 0&a^{-1}_{r+1}&0 \\a''\omega_r&0&0 \\
 v_r\omega_r cu_r & 0& \omega_r{^t}g^{-1}\omega_ra'\omega_r  \end{pmatrix}
\]
\[
 W'(g)|det(g)|^{s+\frac{r-1}{2}}|det(-ca''^{-1})|^r
\delta(v_r,c,u_r,a'') dgdv_rd^{*}cdu_r
\]
\[
=\gamma\int_{\Omega'_r} \int_{N_r\times A_r\times N_r'}
{W}^i_{v_m} \begin{pmatrix} 0&a^{-1}_{r+1}&0 \\a''\omega_r&0&0 \\
v_r\omega_r cu_r & 0& -v_r \omega_r cu_r \omega_r a''^{-1}v'_rb
\end{pmatrix}
\]
\[
 W'(g)|det(g)|^{s+\frac{r-1}{2}}|det(-ca''^{-1})|^r
\delta(v_r,c,u_r,a'') dgdv_rd^{*}cdu_r  \hspace{1.5cm} \cdots (8)
\]
if we write ${^t}g^{-1}=-\omega_rv_r \omega_r cu_r \omega_r
a''^{-1}v'_rb\omega_ra'^{-1}\omega_r$ where $b\in A_r, v_r'\in N_r$.
\\

Now we are going to show that the integrals in (8) are equal to each
other for $i=1,2$ based on results in section 3 and Proposition
\ref{prop4.1}, which will imply the left sides of (5) are also
equal.

Let
\[
h=\begin{pmatrix} 0&a^{-1}_{r+1}&0 \\a''\omega_r&0&0 \\
-\omega_r{^t}x\omega_ra''\omega_r & 0&
\omega_r{^t}g^{-1}\omega_ra'\omega_r  \end{pmatrix}.
\]
Direct matrix computation verifies
\[
h=\begin{pmatrix} 0&a^{-1}_{r+1}&0 \\a''\omega_r&0&0 \\
-\omega_r{^t}x\omega_ra''\omega_r & 0&
\omega_r{^t}g^{-1}\omega_ra'\omega_r  \end{pmatrix}
=\begin{pmatrix} 0&a^{-1}_{r+1}&0 \\a''\omega_r&0&0 \\
v_r\omega_r cu_r & 0& -v_r \omega_r cu_r \omega_r a''^{-1}v'_rb
\end{pmatrix}
\]
\[
=\begin{pmatrix} 1&0&0 \\ 0&v'_r& a''\omega_ru_r^{-1}c^{-1}\omega_r
\\ 0&0&v_r \end{pmatrix}
\begin{pmatrix} 0&a^{-1}_{r+1}&0 \\ 0&0&bI_r \\ \omega_rc&0&0  \end{pmatrix}
\begin{pmatrix} u_r&0& -u_r\omega_ra''^{-1}v'_rb \\ 0&1&0 \\ 0&0&I_r
\end{pmatrix}.
\]

Then apply Proposition \ref{prop3.5}, there exists some $u\in
N_{2r+1}$, independent of $\pi_i$, i=1,2, such that
\[
\overline{W}^i_{v_m}(uh)=W^i_{v_m}(h^{-1}u^{-1})
\]
whenever they are nonzero. \\

On the other hand, we can rewrite the above $h$ as $h=\omega_n
{^t}p^{-1}d \alpha^{r+1} \omega_n$ for some element $p\in P_n$, the
mirabolic subgroup, and $d$ in the center of $G_n$ with
\[
{^t}p^{-1}d=
\begin{pmatrix} {^t}g^{-1}& -{^t}x &   \\ &I_{r}& \\ &&1   \end{pmatrix}
\begin{pmatrix} \omega_{2r} & \\ & 1 \end{pmatrix} \alpha^{r+1}a^{-1}\alpha^r.
\]
Thus
\[
\overline{W}^i_{v_m}(uh)=W^i_{v_m}(h^{-1}u^{-1})=W^i_{v_m}(\omega_n
\alpha^r d^{-1} {^t}p\omega_n u^{-1}) =\omega_{\pi_i}^{-1}(d)
\widetilde{W}^i_{\omega_n.v_m}(\alpha^rp^{-1}
\omega_n{^t}u\omega_n),
\]
where $\omega_{\pi_i}$ is the central character of $\pi_i$.  Note
that the element $\alpha^rp^{-1} \omega_n{^t}u\omega_n$ belongs to
the double coset $N_{2r+1}\alpha^{r} P_n$, and it is here we need to
require the number of twists is at least $[\frac{n}{2}]=r$ as we
will see.

By Corollary 2.7, \cite{JNS}, $\pi_1,\pi_2$ have the same central
characters. By Lemma 3.2, $\widetilde{W}^i_{\omega_n.v_m}$ agree on
$P_n$, $i=1,2$. As $\pi_1,\pi_2$ have the same local gamma factors
twisted by irreducible generic representations of $G_l$ with $1\le
l\le [\frac{n}{2}]=r$, by Proposition \ref{prop4.1},
$\widetilde{W}^i_{\omega_n.v_m}, i=1,2$ also agree on
$N_n\alpha^rP_n$. As $\alpha^rp^{-1} \omega_n{^t}u\omega_n$ is an
element in $N_n\alpha^rP_n$, by the above computation we find
\[
\overline{W}^1_{v_m}(uh)=\omega_{\pi_1}^{-1}(d)
\widetilde{W}^1_{\omega_n.v_m}(\alpha^rp^{-1}
\omega_n{^t}u\omega_n)=\omega_{\pi_2}^{-1}(d)
\widetilde{W}^2_{\omega_n.v_m}(\alpha^rp^{-1}
\omega_n{^t}u\omega_n)= \overline{W}^2_{v_m}(uh)
\]
whenever they are nonzero, which implies
$W^1_{v_m}(h)=W^2_{v_m}(h)$. Hence the integrals in (8) are equal to
each other for $i=1,2$. It is clear that we need to require the
number of twists is at least $[\frac{n}{2}]=r$ as $\alpha^rp^{-1}
\omega_n{^t}u\omega_n\in N_n\alpha^rP_n$.

Since $\gamma(s, \pi^*_i\times \rho,\psi)\gamma(1-s, \pi_i\times
\rho^*, \psi^{-1})=1 $ by the statements after Lemma 3.1 in
\cite{Hen}, by the assumptions on local gamma factors, we get
$\gamma(s, \pi^*_1\times \rho,\psi)=\gamma(s, \pi^*_2\times
\rho,\psi)$. Then we can conclude that the left hand sides of (5)
are equal for $i=1,2$, which means the right hand sides are also
equal to each other.

%Since we are assuming $\pi_i, i=1,2$ are supercuspidal, their
%Whittaker functions have compact support module $Z_nN_n$. On the
%right hand side of (5), the Whittaker functions we have are of the
%form
%\[
%\widetilde{\widetilde{W^i} }_{v'}\begin{pmatrix} g_r& \\ &I_{r+1}
%\end{pmatrix}
%\]
%with $g_r\in G_r$. Thus these Whittaker functions have compact
%supports module $N_r$.
Now apply Lemma \ref{le2.3}, we conclude that
\[
\widetilde{\widetilde{W^1}}_{\omega_n.v_m}\left( \begin{pmatrix} g&
\\ &I_{r+1} \end{pmatrix} \omega_{n,r} \begin{pmatrix} \omega_{2r} &
\\ & 1 \end{pmatrix} \alpha^{r+1}a^{-1} \right)=
\widetilde{\widetilde{W^2}}_{\omega_n.v_m}\left( \begin{pmatrix} g&
\\ &I_{r+1} \end{pmatrix}\omega_{n,r}
\begin{pmatrix} \omega_{2r} & \\ & 1 \end{pmatrix} \alpha^{r+1}a^{-1}
\right) \cdots (9).
\]

Let $g=\omega_r$ in the above identity, we finally proved the
theorem.

\end{proof}

\begin{thm}
\label{thm4.3} \textbf{Conjecture 2} is true when $n=2r+1$ is odd.
\end{thm}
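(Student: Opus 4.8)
The plan is to assemble Theorem \ref{thm4.2} with the Bessel-function machinery of \cite{cjs} recalled in the introduction; essentially all of the difficulty is already contained in Theorem \ref{thm4.2}, so what remains is a short deduction. Write $n=2r+1$. First I would check that Theorem \ref{thm4.2} applies to the representations of \textbf{Conjecture 2}: every irreducible supercuspidal representation of $GL_n(F)$ is generic, so the $\pi_1,\pi_2$ of \textbf{Conjecture 2} are generic irreducible unitarizable, and by hypothesis $\gamma(s,\pi_1\times\rho,\psi)=\gamma(s,\pi_2\times\rho,\psi)$ for every irreducible generic smooth $\rho$ of $G_l$ with $1\le l\le [\frac{n}{2}]=r$. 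Theorem \ref{thm4.2} then gives
\[
W^1_{v_m}(a\omega_n)=W^2_{v_m}(a\omega_n)\qquad(\forall\, a\in A_n)
\]
for the normalized Howe vectors of $\pi_1$ and $\pi_2$ of every sufficiently large level $m$.

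Next I would invoke Proposition 5.3 of \cite{cjs}: since the normalized Howe vectors are the partial Bessel functions approximating $j_{\pi_i}$ as $m\to\infty$, the identity above (for all large $m$) yields $j_{\pi_1}=j_{\pi_2}$ on $A_n\omega_n$, hence $j_{\pi_1}=j_{\pi_2}$ everywhere by the equivariance $j_\pi(u_1gu_2)=\psi(u_1)\psi(u_2)j_\pi(g)$ together with the vanishing of $j_\pi$ off $N_nA_n\omega_nN_n$. Finally I would deduce $\pi_1\cong\pi_2$ from $j_{\pi_1}=j_{\pi_2}$, which is precisely the assertion that the Bessel function determines a supercuspidal representation up to isomorphism, obtained in \cite{cjs} from the weak kernel formula (Theorem \ref{thm1}). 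The mechanism is: supercuspidality of $\pi_1,\pi_2$ makes the restriction of each Whittaker model to the mirabolic $P_n$ isomorphic to $\mathrm{ind}_{N_n}^{P_n}\psi$, so fixing such isomorphisms identifies $W$ with $W|_{P_n}$ and produces a $P_n$-equivariant linear isomorphism $T\colon\CW(\pi_1,\psi)\to\CW(\pi_2,\psi)$ with $(TW)|_{P_n}=W|_{P_n}$; the weak kernel formula writes $W(b\omega_n)$, $b\in A_n$, as an explicit integral of $j_{\pi_i}$ against the restriction of $W$ to a family of matrices lying in $P_n$, so $j_{\pi_1}=j_{\pi_2}$ forces $W$ and $TW$ to agree on the open dense cell $N_nA_n\omega_nN_n$; combined with agreement on $P_n$, the local constancy of Whittaker functions, and the fact that $G_n$ is generated by $P_n$ and $\omega_n$, one checks that $T$ intertwines $\omega_n$ as well and hence is a $G_n$-isomorphism. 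This gives \textbf{Conjecture 2} for $n=2r+1$ odd.

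I expect no genuine obstacle here, since Theorem \ref{thm4.2} — which rests on Proposition \ref{prop3.5}, the cell analysis of Section 3, and the Rankin--Selberg identities on $G_{2r+1}\times G_r$ — does the real work. The two points worth stating carefully are that supercuspidality, not merely genericity, is used in the last step (it is what makes $\pi_i|_{P_n}$ independent of $i$ and thus permits the construction of $T$), and that the Howe-vector identity is needed for all sufficiently large levels $m$, not a single one, so that Proposition 5.3 of \cite{cjs} can pass to the limit defining $j_{\pi_i}$.
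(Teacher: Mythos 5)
Your proposal is correct and tracks the paper's proof quite closely in its first half: Theorem~\ref{thm4.2} gives the Howe-vector identity $W^1_{v_m}(a\omega_n)=W^2_{v_m}(a\omega_n)$ for every sufficiently large level $m$, and Proposition 5.3 of \cite{cjs} then passes to the limit to give $j_{\pi_1}=j_{\pi_2}$. The divergence is in the final step. You phrase it abstractly: use supercuspidality to identify $\pi_i|_{P_n}$ with $\mathrm{ind}_{N_n}^{P_n}\psi$, obtain a $P_n$-equivariant map $T\colon\CW(\pi_1,\psi)\to\CW(\pi_2,\psi)$ with $(TW)|_{P_n}=W|_{P_n}$, and then use the weak kernel formula to show $T$ also intertwines $\omega_n$. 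The paper instead stays concrete: for $g=u_1\omega_n a u_2$ in the big cell $\Omega_n$, the weak kernel formula (Theorem~\ref{thm1}) expresses $W^i_{v_m}(g)$ as an integral of $j_{\pi_i}$ against values of $W^i_{v_m}$ at points of $P_n$; since $j_{\pi_1}=j_{\pi_2}$ and, by Lemma~\ref{le3.2}, any two normalized Howe vectors of the same level automatically agree on $P_n$, the integrals coincide, so $W^1_{v_m}=W^2_{v_m}$ on the dense cell $\Omega_n$ and hence on $G_n$; multiplicity one of Whittaker models then finishes. The two routes buy slightly different things: your version exhibits an intertwining operator on the whole Whittaker model and explicitly locates where supercuspidality (the Kirillov-model isomorphism) enters, but at the cost of verifying that $T$ really does intertwine all of $G_n$, not just $P_n$, which is a short but not entirely automatic argument. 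The paper's version sidesteps this by working with the single function $W^i_{v_m}$, for which the agreement on $P_n$ comes for free from Lemma~\ref{le3.2} (so the last step needs only genericity, irreducibility, and unitarizability, not supercuspidality per se). Your caveat that the Howe-vector identity must hold for all large $m$, not just one, is exactly right and is indeed used to invoke Proposition 5.3 of \cite{cjs}.
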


\begin{proof}
Let $\pi_1,\pi_2$ be irreducible unitarizable supercuspidal
representations of $G_n$ satisfying the assumptions in
\textbf{Conjecture 2}. By Theorem \ref{thm4.2}, their normalized
Howe vectors $W^{i}_{v_m}, i=1,2$ satisfy
\[
W^1_{v_m}(a\omega_n)=W^2_{v_m}(a\omega_n).
\]
As this is true for all levels of Howe vectors, then by Proposition
5.3 in \cite{cjs}, we find
\[
j_{\pi_1}(a\omega_n )=j_{\pi_2}(a\omega_n)
\]
for all $a\in A_n$. Thus $j_{\pi_1}(g)=j_{\pi_2}(g)$ for all $g\in
\Omega_n$.  \\

Consider $g=u_1\omega_nau_2\in \Omega_n$, by the weak kernel formula
Theorem \ref{thm1}, we have
\[
W^i_{v_m}(g)=W^i_{v_m}(u_1\omega_nau_2)=\psi(u_1)W^i_{u_2.v_m}(\omega_na)
\]
\[
=\int j_{\pi_i}\left(b\omega_n \begin{pmatrix} a_1 & \\ x_{21} & a_2 \\ & & \ddots \\
x_{n-1,1} & \cdots & x_{n-1,n-2} & a_{n-1} \\ & & & &  1
\end{pmatrix}^{-1} \right) W^i_{u_2.v_m}\begin{pmatrix} a_1 & \\ x_{21} & a_2 \\
& & \ddots \\ x_{n-1,1} & \cdots & x_{n-1,n-2} & a_{n-1}\\ & & & & 1
\end{pmatrix}
\]
\[
|a_1|^{-(n-1)}da_1|a_2|^{-(n-2)}dx_{21}da_2\cdots
|a_{n-1}|^{-1}dx_{n-1,1}\cdots dx_{n-1,n-2}da_{n-1}
\]
\[
=\int j_{\pi_i}\left(b\omega_n \begin{pmatrix} a_1 & \\ x_{21} & a_2 \\ & & \ddots \\
x_{n-1,1} & \cdots & x_{n-1,n-2} & a_{n-1} \\ & & & &  1
\end{pmatrix}^{-1} \right) W^i_{v_m}\left(\begin{pmatrix} a_1 & \\ x_{21} & a_2 \\
& & \ddots \\ x_{n-1,1} & \cdots & x_{n-1,n-2} & a_{n-1}\\ & & & & 1
\end{pmatrix}u_2\right)
\]
\[
|a_1|^{-(n-1)}da_1|a_2|^{-(n-2)}dx_{21}da_2\cdots
|a_{n-1}|^{-1}dx_{n-1,1}\cdots dx_{n-1,n-2}da_{n-1}.
\]

We note that the element
\[
\left(b\omega_n \begin{pmatrix} a_1 & \\ x_{21} & a_2 \\ & & \ddots \\
x_{n-1,1} & \cdots & x_{n-1,n-2} & a_{n-1} \\ & & & &  1
\end{pmatrix}^{-1} \right)
\]
is in $\Omega_n$, so
\[
j_{\pi_1}\left(b\omega_n \begin{pmatrix} a_1 & \\ x_{21} & a_2 \\ & & \ddots \\
x_{n-1,1} & \cdots & x_{n-1,n-2} & a_{n-1} \\ & & & &  1
\end{pmatrix}^{-1} \right)=j_{\pi_2}\left(b\omega_n \begin{pmatrix} a_1 & \\ x_{21} & a_2 \\ & & \ddots \\
x_{n-1,1} & \cdots & x_{n-1,n-2} & a_{n-1} \\ & & & &  1
\end{pmatrix}^{-1} \right).
\]

The element
\[
\left(\begin{pmatrix} a_1 & \\ x_{21} & a_2 \\
& & \ddots \\ x_{n-1,1} & \cdots & x_{n-1,n-2} & a_{n-1}\\ & & & & 1
\end{pmatrix}u_2\right)
\]
is in $P_n$. By Lemma \ref{le3.2}, $W^1_{v_m}(p)=W^2_{v_m}(p),
\forall p\in P_n$. Hence the last integrals are equal to each other
for $i=1,2$, which implies $W^1_{v_m}(g)=W^1_{v_m}(g)$ for all $g\in
\Omega_n$. As $\Omega_n$ is open dense in $G_n$, we eventually get
that $W^1_{v_m}(g)=W^2_{v_m}(g)$ for all $g\in G_n$ which finishes
the proof by the multiplicity one theorem on Whittaker models.
\end{proof}

\begin{thm}
\label{thm4.4} \textbf{Conjecture 2} is true when $n=2r$ is even.
\end{thm}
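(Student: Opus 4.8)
The strategy is to reduce the even case to the odd case by a parabolic induction argument. Observe first that neither the proof of Theorem \ref{thm4.2} nor the argument in the proof of Theorem \ref{thm4.3} makes essential use of supercuspidality. Indeed, Theorem \ref{thm4.2} is already stated for generic irreducible unitarizable representations, and the remaining ingredients used to pass from it to the isomorphism of the two representations --- Propositions \ref{prop3.3}--\ref{prop3.5}, the weak kernel formula (Theorem \ref{thm1}), Lemma \ref{le3.2}, Proposition 5.3 of \cite{cjs}, the density of $\Omega_{2r+1}$ in $G_{2r+1}$, and multiplicity one for Whittaker models --- are all valid for an arbitrary generic irreducible unitarizable representation of $G_{2r+1}$. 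The only fact about the pair that enters, besides the hypothesis on $\gamma$-factors, is the equality of central characters, which here follows from the hypothesis with $l=1$. We therefore obtain the following sharpening of the odd case: \emph{if $\Pi_1,\Pi_2$ are generic irreducible unitarizable representations of $G_{2r+1}$ with $\gamma(s,\Pi_1\times\rho,\psi)=\gamma(s,\Pi_2\times\rho,\psi)$ for all $l$ with $1\le l\le r$ and all irreducible generic smooth representations $\rho$ of $G_l$, then $\Pi_1\cong\Pi_2$.}

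Now let $\pi_1,\pi_2$ be irreducible unitarizable supercuspidal representations of $G_{2r}$ satisfying the hypotheses of \textbf{Conjecture 2} for $n=2r$; over a $p$-adic field these are automatically generic. Fix a unitary character $\chi$ of $F^{\times}$ (for instance $\chi=1$), and let $\Pi_i$ be the normalized parabolic induction to $G_{2r+1}$ of $\pi_i\otimes\chi$ from the standard parabolic with Levi $G_{2r}\times G_1$. Since $2r\ne 1$, the representations $\pi_i$ and $\chi$ are unlinked, so $\Pi_i$ is irreducible; it is generic because it is induced from a generic representation of the Levi, and it is unitarizable because $\pi_i$ and $\chi$ are unitary and normalized parabolic induction preserves unitarizability. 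Thus $\Pi_1,\Pi_2$ are generic irreducible unitarizable representations of $G_{2r+1}$.

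By the multiplicativity of local $\gamma$-factors, $\gamma(s,\Pi_i\times\rho,\psi)=\gamma(s,\pi_i\times\rho,\psi)\,\gamma(s,\chi\times\rho,\psi)$ for every irreducible generic $\rho$ of $G_l$. Since $[\frac{2r+1}{2}]=r$ and $\pi_1,\pi_2$ have matching twisted $\gamma$-factors for all $1\le l\le r$, the same holds for $\Pi_1,\Pi_2$, and by the sharpening of the odd case stated above we conclude $\Pi_1\cong\Pi_2$. Finally, by uniqueness of the supercuspidal support of an irreducible admissible representation, the isomorphism $\Pi_1\cong\Pi_2$ forces the multiset $\{(\pi_1,G_{2r}),(\chi,G_1)\}$ to equal $\{(\pi_2,G_{2r}),(\chi,G_1)\}$; comparing the components on $G_{2r}$, and using $2r>1$, gives $\pi_1\cong\pi_2$. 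This proves \textbf{Conjecture 2} for $n=2r$, and hence, together with Theorem \ref{thm4.3}, in all cases.

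The step I expect to be the main obstacle is the claim in the first paragraph that the whole of the machinery behind Theorems \ref{thm4.2} and \ref{thm4.3} goes through for the merely generic unitarizable (non-supercuspidal) representations $\Pi_i$: one must verify that Proposition 5.3 of \cite{cjs} and the weak kernel formula are really proved there in this generality, and that Lemma \ref{le3.2} and Proposition \ref{prop3.3} are invoked only through properties common to all generic irreducible unitarizable representations. Should any of these genuinely require supercuspidality, the fallback would be to repeat the argument of Theorem \ref{thm4.2} directly for even $n$, using Rankin--Selberg integrals on $G_{2r}\times G_r$ instead of $G_{2r+1}\times G_r$ and redoing the Bruhat-cell computations of Propositions \ref{prop3.4} and \ref{prop3.5} in that setting.
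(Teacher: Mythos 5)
Your overall strategy coincides with the paper's: induce $\pi_i\otimes\chi$ up to an irreducible generic unitarizable $\tau_i$ on $G_{2r+1}$, use multiplicativity of $\gamma$-factors, run the odd-case machinery there, and descend via the cuspidal support. The genuine gap is exactly the step you flag and then wave through: the assertion that the proof of Theorem \ref{thm4.3} applies verbatim to the non-supercuspidal representations $\tau_i$. Theorem \ref{thm4.2} is indeed stated and proved for general generic unitarizable representations, so the identity $W^1_{v_m}(a\omega_{2r+1})=W^2_{v_m}(a\omega_{2r+1})$ does hold for the Howe vectors of $\tau_1,\tau_2$. But the passage from this identity to an isomorphism in Theorem \ref{thm4.3} goes through the weak kernel formula (Theorem \ref{thm1}), and the paper invokes that formula only for supercuspidal representations (note the explicit ``Since $\pi_1,\pi_2$ are supercuspidal representations'' in the proof of Theorem \ref{thm4.4}); for a non-supercuspidal $\tau_i$ the Whittaker functions restricted to $P_n$ are no longer compactly supported modulo $N_n$ and the applicability of the kernel formula is not established in the paper's framework. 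If the extension were automatic, the entire proof of Theorem \ref{thm4.4} would be the two lines you wrote, which is visibly not how the author proceeds.

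What the paper actually does, after establishing the Howe-vector identity for $\tau_1,\tau_2$, is give three independent ways around this obstruction. The first concedes the point by appealing to local integrability of Bessel functions (Corollary 7.2 of \cite{cjs16}), explicitly labelled a ``well expected'' property rather than part of the paper's established toolkit. The second --- the cleanest repair of your argument --- uses the Cogdell--Piatetski-Shapiro result on derivatives to show that the restriction of the Howe vector of $\tau_i$ to $\begin{pmatrix} g& \\ &1\end{pmatrix}$, cut off by a Schwartz function near $0$, is a Howe vector of $\tau_i^{(1)}\cong\pi_i$; this transfers the identity on $A_{2r+1}\omega_{2r+1}$ down to the Bessel functions of $\pi_1,\pi_2$ themselves, which \emph{are} supercuspidal, so the weak kernel formula argument of Theorem \ref{thm4.3} legitimately applies. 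The third uses Shahidi's formula expressing local coefficients as Mellin transforms of partial Bessel functions to deduce $\gamma(s,\tau_1\times\rho,\psi)=\gamma(s,\tau_2\times\rho,\psi)$ for all generic $\rho$ of $G_{2r}$, and then Henniart's $(2r+1,2r)$ converse theorem gives $\tau_1\cong\tau_2$, followed by the Bernstein--Zelevinsky classification (your supercuspidal-support step). To make your write-up into a complete proof you must replace the black-box appeal to a ``sharpened odd case'' by one of these mechanisms, or else actually prove the weak kernel formula (or local integrability of $j_{\tau_i}$) for the induced representations in question.
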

\begin{proof}
Suppose $\pi_1,\pi_2$ are irreducible unitarizable supercuspidal
representations of $G_{2r}$ satisfying the assumptions in
\textbf{Conjecture 2}. Take a unitary character
 $\chi$
 of $G_{1}$, and form the normalized induced representations $\tau_1=Ind(\pi_1\otimes \chi)$,
 $\tau_2=Ind(\pi_2\otimes \chi)$. By Theorem 4.2 in \cite{BZ}, both $\tau_1, \tau_2$ are irreducible generic
 smooth unitarizable representations of $G_{2r+1}$. For
any $l$ with $1\le l\le r$,
 and any irreducible generic smooth representation $\rho$ of $G_l$, we have
\[
\gamma(s, \pi_1\times \rho, \psi)=\gamma(s, \pi_2\times \rho, \psi).
\]
By the multiplicativity of local gamma factors, we get
\[
\gamma(s, \tau_1\times \rho, \psi)=\gamma(s, \tau_2\times \rho,
\psi).
\]
Then by \textbf{Theorem 4.2}, for all normalized Howe vector
$W^i_{v_m}$ of $\tau_i, i=1,2$, we have
\[
W^1_{v_m}(a\omega_n)=W^2_{v_m}(a\omega_n)\ \ \ \ \cdots (10).
\]
In the following, we will present three different approaches to
finish the proof. The first is based on well expected property of
Bessel functions: local integrability. The other two approaches are
based on well established results: Derivatives of smooth
representations of $G_n$ and Shahidi's formula expressing local
coefficients as Mellin transforms of partial Bessel functions. All
three approaches have its own interests and they are quite
independent to one another. They all illustrate the power of Bessel
functions.   \\

\textit{The first approach}. The first way is based on the well
expected property: local integrability of Bessel functions. As (10)
is true for all levels of Howe vectors, by Proposition 5.3 in
\cite{cjs}, we find
\[
j_{\pi_1}(a\omega_n )=j_{\pi_2}(a\omega_n)
\]
for all $a\in A_n$. Thus $j_{\pi_1}(g)=j_{\pi_2}(g)$ for all $g\in
\Omega_n$. It then follows from Corollary 7.2 \cite{cjs16} that
$\pi_1\cong \pi_2$. This finishes the first approach.   \\

\textit{The second approach}. The second is based on the theory of
derivatives of smooth representations on $G_n$. We first recall a
result of Cogdell and Piatetski-Shapiro about derivatives. Let $\pi$
be an irreducible generic representations of $G_n$. Take a Whittaker
function $W\in \CW(\pi,\psi)$, and a Schwartz function $\Phi_0\in
\CS(F^{n-1})$ which is supported in a sufficiently small
neighborhood of $0$, if the first derivative $\pi^{(1)}$ of $\pi$ is
irreducible, then there is a Whittaker function $W'\in
\CW(\pi^{(1)},\psi)$, such that for all $g\in G_{n-1}$, we have
\[
W\begin{pmatrix} g& \\ &1
\end{pmatrix}\Phi_0(\epsilon_{n-1}g)=|det(g)|^{1/2}W'(g)\Phi_0(\epsilon_{n-1}g),
\]
where $\epsilon_{n-1}=(0,...,0,1)$. This is a special case of the
second half of corollary to Proposition 1.7 in \cite{CPS2}.    \\

Let $W^i_{v_m}$ be the normalized Howe vector of level $m$ in
$\tau_i, i=1,2$.
Recall $\alpha=\begin{pmatrix} & I_{n-1} \\
1&
\end{pmatrix}$. Take $W=W^i_{\alpha.v_m}$, $\Phi_0$ to be the characteristic function of a sufficiently
small neighborhood of $0$. Note that $\tau_i^{(1)}\cong \pi_i$ by
Lemma 4.5 in \cite{BZ} and it is irreducible. Apply the above result
of Cogdell and Piatetski-Shapiro, we conclude that there exists some
$W'_i\in \CW(\tau_i^{(1)},\psi)$, such that
\[
W_{\alpha.v_m}^i\begin{pmatrix} g& \\ &1
\end{pmatrix}\Phi_0(\epsilon_{n-1}g)=|det(g)|^{1/2}W'_i(g)\Phi_0(\epsilon_{n-1}g)
\cdots (11).
\]

Given $g\in G_{n-1}, j\in J_{n-1,m}$, where $J_{n-1,m}$ is the same
as in the \textbf{Definition \ref{def3}}. Choose $z$ in the center
of $G_{n-1}$ so that $\Phi_0(\epsilon_{n-1}gz)=1$ and
$\Phi_0(\epsilon_{n-1}gzj)=1$. By (11), we have
\[
W_{\alpha.v_m}^i\begin{pmatrix} gzj& \\ &1
\end{pmatrix}=|det(gzj)|^{1/2}W'_i(gzj)
\cdots (12)
\]
and
\[
W_{\alpha.v_m}^i\begin{pmatrix} gz& \\ &1
\end{pmatrix}=|det(gz)|^{1/2}W'_i(gz)
\cdots (13).
\]

On the other hand, note that $\begin{pmatrix} j& \\ &1
\end{pmatrix}\alpha=\alpha \begin{pmatrix} 1& \\ &j
\end{pmatrix}$, $\begin{pmatrix} 1& \\ &j
\end{pmatrix}\in J_{n,m}$ and $\psi\begin{pmatrix} 1& \\ &j
\end{pmatrix}=\psi\begin{pmatrix} j& \\ &1
\end{pmatrix}=\psi(j)$. Then the left hand side of (12) is
\begin{eqnarray*}
&&W_{\alpha.v_m}^i\begin{pmatrix} gzj& \\ &1
\end{pmatrix} \\
&=& W_{v_m}^i\left(\begin{pmatrix} gz& \\ &1
\end{pmatrix}\alpha \begin{pmatrix} 1& \\ &j
\end{pmatrix}\right)  \\
&=&\psi(j)W_{\alpha.v_m}^i\begin{pmatrix} gz& \\ &1
\end{pmatrix} \\
%&=& \psi(j)W_{\alpha.v_m}\begin{pmatrix} gz& \\ &1
%\end{pmatrix} \\
&=& \psi(j)|det(gz)|^{1/2}W'_i(gz) \hspace{2cm} (by \ \ (13)).
\end{eqnarray*}
This equals the right hand side of (12), hence
\[
\psi(j)|det(gz)|^{1/2}W'_i(gz)=|det(gzj)|^{1/2}W'_i(gzj),
\]
which implies that
\[
\psi(j)W'_i(gz)=W'_i(gzj).
\]
As $\tau_i^{(1)}\cong \pi_i$ and it has a central character. It
follows that
\[
\psi(j)W'_i(g)=W'_i(gj),
\]
which proves that $W'_i$ is the Howe vector of level $m$ for $\pi_i,
i=1,2$.   \\

Now suppose $a\in G_{n-1}$ is diagonal, choose $z$ in the center of
$G_{n-1}$ which is sufficiently close to $0$ so that
$\Phi_0(\epsilon_{n-1}\omega_{n-1}az)=1$. Apply (11) to
$g=\omega_{n-1}az$, we have
\[
W^i_{\alpha.v_m}\begin{pmatrix} \omega_{n-1}az& \\ &1
\end{pmatrix}=
|det(\omega_{n-1}az)|^{1/2}W'_i(\omega_{n-1}az).
\]
As $W^i_{\alpha.v_m}\begin{pmatrix} \omega_{n-1}az& \\ &1
\end{pmatrix}=W^i_{v_m}\begin{pmatrix} &\omega_{n-1}az \\ 1&
\end{pmatrix}$, it follows from (10) that
\[
W'_1(\omega_{n-1}az)=W'_2(\omega_{n-1}az),
\]
which implies
\[
W'_1(\omega_{n-1}a)=W'_2(\omega_{n-1}a).
\]
As this is true for all Howe vectors $W'_i$ and all diagonal $a$, we
conclude that the Bessel functions of $\pi_1,\pi_2$ are equal to
each other by Proposition 5.3 in \cite{cjs}. Since $\pi_1,\pi_2$ are
supercuspidal representations, as in the proof of \textbf{Theorem
\ref{thm4.3}}, it follows from the weak kernel formula
(\textbf{Theorem \ref{thm1}}) that they are in fact isomorphic. This
ends the proof of the second approach.
\\

\textit{The third approach}. We first need to recall Shahidi's
formula (Theorem 6.2 in \cite{Shahidi:2002}) expressing local
coefficients as Mellin transform of partial Bessel functions in our
case. Let $P=MU$ be the standard parabolic subgroup of $G_{2r}$ with
Levi $M=G_r\times G_r$. $U$ is the unipotent part, with opposite
$\bar U$. Put $\omega_0=\begin{pmatrix} &I_r\\ I_r&
\end{pmatrix}$ and $\omega_M=\begin{pmatrix} \omega_r&\\ &\omega_r
\end{pmatrix}$. Let $N_M=N_{2r}\cap M$. Use $Z_M,
Z$ to denote the centers of $M$ and $G_{2r}$ respectively. Let
$Z_M^0=Z\backslash Z_M$.   \\

As in \cite{Shahidi:2002}, we start with the following decomposition
\[
\omega_{0}^{-1}n=mn'\bar n   \hspace{5cm} \cdots (*1)
\]
valid for almost all $n\in U$, where $m\in M, n'\in U, \bar n\in
\bar U$. The Bruhat decomposition of $m$ is
\[
m=u_1t\omega u_2  \hspace{5cm} \cdots (*2),
\]
where $u_1,u_2\in N_m, t\in A_{2r}$ and $\omega$ is certain Weyl
group element of $M$. As in section 3 of \cite{CPSSH:2008}, if we
set $u'=\omega_0u_1^{-1}\omega_0^{-1}$ and $n_1=u'n(u')^{-1}$, then
the map $n\to n_1$ gives a bijection from the set of all $n$
satisfying (*1) onto the Bruhat double coset $\bar
B_{2r}\omega_0\omega \bar UN_M$ of $G_{2r}$.  \\

Shahidi's formula involves certain unipotent integral defining
partial Bessel functions (see (*3) below). For this integral to be
nonzero, $m\in M$ appearing in the integration must support a Bessel
function in the sense of \cite{CPSSH:2005}, at least for some full
measure subset. Note that the cell $\bar B_{2r}\omega_{2r} \bar
UN_M$ is the unique Bruhat double coset of $G_{2r}$ intersecting $U$
in an open dense subset. By Proposition 3.2 in \cite{CPSSH:2008}, we
have $\omega_{2r}=\omega_0\omega$ which implies that
$\omega=\omega_M$, and this Weyl element does support a Bessel
function.    \\

$Z_M^0N_M$ acts on $U$ by conjugation, we will use
$Z_M^0N_M\backslash U$ to denote $Z_M^0N_M$ orbits in $N$, $dn$ is
certain measure on this set of orbits. We first consider $N_M$
orbits in $U$. For $\begin{pmatrix} u_1& \\ &u_2 \end{pmatrix}\in
N_M, \begin{pmatrix} I_r& X \\ &I_r \end{pmatrix}\in U$, we have
\[
\begin{pmatrix} u_1& \\ &u_2 \end{pmatrix}\begin{pmatrix} I_r& X \\ &I_r \end{pmatrix}
\begin{pmatrix} u_1& \\ &u_2 \end{pmatrix}^{-1}=\begin{pmatrix} I_r& u_1Xu_2^{-1} \\ &I_r
\end{pmatrix}.
\]
Hence the matrices like
\[
\begin{pmatrix} I_r& \omega_r t \\ &I_r \end{pmatrix}
\]
with $t\in A_r$, form a set of representatives of an open dense
subset of $N_M\backslash U$. Direct computation shows that the
decomposition (*1) for such matrices is
\[
\begin{pmatrix} I_r& \omega_rt \\ &I_r \end{pmatrix}=
\begin{pmatrix} -(\omega_rt)^{-1}&  \\ &\omega_rt \end{pmatrix}
\begin{pmatrix} I_r& -\omega_rt \\ &I_r \end{pmatrix}
\begin{pmatrix} I_r&  \\ (\omega_rt)^{-1} &I_r \end{pmatrix}.
\]
It then follows that we can find a set of representatives of a full
measure subset $\Omega$ of $Z_M^0N_M\backslash U$, and satisfy
decomposition $\omega_{0}^{-1}n=mn'\bar n$, where $n\in \Omega$ and
$m$ has the form $\omega_Ma$ for certain diagonal matrices $a\in
A_{2r}$. This is a weak version of Proposition 4.2.3 in \cite{Ts}.
\\

Let $\pi, \rho$ be generic irreducible representations of $G_r$,
denote by $\sigma=\pi\otimes \rho$ which is a generic irreducible
representations of $M$. Then the central characters
$\omega_\sigma=\omega_\pi\otimes \omega_\rho$. We also define for
$t\in F^*$, define characters of $F^*$ by
$\omega_\sigma(t)=\omega_\sigma(\alpha^\vee(t))$ and
$(\omega_0.\omega_\sigma)(t)=\omega_\sigma(\omega_0^{-1}\alpha^\vee(t)\omega_0)$,
where $\alpha^\vee (t)=\begin{pmatrix} tI_r& \\ &I_r \end{pmatrix}$.
\\

Now if $W_{\tilde v}$ is a Whittaker function in $\sigma$ with
$W_{\tilde v}(I_{2r})=1$. Let $\bar U_0$ be a sufficiently large
open compact subgroup of $\bar U$ and $\phi$ its characteristic
function. For $n\in \Omega, \omega_{0}^{-1}n=mn'\bar n, y\in F^*$,
we define the partial Bessel function
\[
j_{\tilde v,\bar U_0}(m,y)=\int_{N_{M,n}\backslash N_M} W_{\tilde
v}(mu^{-1})\phi(zu\bar nu^{-1}z^{-1})\psi(u)du \cdots (*3),
\]
where $N_{M,n}=\{u\in N_M:unu^{-1}=n \}$ and
$z=\alpha^\vee(y^{-1}.\dot{\chi}_\alpha)$ is certain element in
$Z_M^0$. If the character
$\omega_\sigma(\omega_0.\omega_\sigma^{-1})$ is ramified, Theorem
6.2 in \cite{Shahidi:2002} for the local coefficient $C(s,\sigma)$,
applied to our case, can be stated as follows.
\[
C(s,\sigma)^{-1}=\gamma(2<\tilde
\alpha,\alpha^\vee>s,\omega_\sigma(\omega_0.\omega_\sigma^{-1}),\psi
)^{-1}
\]
\[
\times \int_{Z_M^0N_M\backslash U} j_{\tilde v,\bar
U_0}(\dot{m},y_0)\omega_{(\sigma)_s}^{-1}(\dot{\chi}_\alpha)
(\omega_{0}.\omega_{(\sigma)_s})(\dot{\chi}_\alpha)q^{(s\tilde
\alpha+\rho, H_M(\dot{m}))}d\dot{n} \cdots (*4).
\]
In this formula, $y_0\in F^*$ is an element with $ord(y_0)=-d-f$,
where $d, f$ are conductors of $\psi$ and
$\omega_\sigma^{-1}(\omega_0.\omega_\sigma)$, respectively. The
choice of $y_0$ is irrelevant. The above integral is independent of
the choice of $\tilde v$ and $\bar U_0$ as long as $W_{\tilde
v}(I_{2r})=1$ and $\bar U_0$ is a sufficiently large compact open
subgroup of $\bar U$. $\dot{m}, \dot{n}$ are related by (*1).
Moreover by choosing representatives $\dot{n}$ in $\Omega$,
$\dot{m}$ have the form $\omega_Ma$ for certain diagonal matrices
$a\in A_{2r}$ as we discussed above. We refer to \cite{Shahidi:2002}
for the unexplained terms in the formula.    \\

We also note that, by Lemma 3.11 in \cite{CPSSH:2008}, the domain of
integration in the definition of $j_{\tilde v, \bar U_0}(m,y_0)$ is
independent of $m$, and depends only on $y_0$ and $\bar U_0$. \\

Now we begin the third proof. Let $\rho$ be an irreducible generic
representation of $G_{2r}$, choose a character $\chi'$ of $G_1$, so
that the normalized induced representation $\sigma=Ind(\rho\otimes
\chi')$ is generic and irreducible. Consider $\tau_i\otimes \sigma$,
which is an irreducible generic representation of $M=G_{2r+1}\times
G_{2r+1}$. The central character of $\tau_i\otimes \sigma$ is
$\omega_{\tau_i}\otimes (\omega_\rho\chi')$. Recall that for $t\in
F^*$, $\omega_\sigma(t)=\omega_\sigma(\alpha^\vee(t))$ and
$(\omega_0.\omega_\sigma)(t)=\omega_\sigma(\omega_0^{-1}\alpha^\vee(t)\omega_0)$,
where $\alpha^\vee (t)=\begin{pmatrix} tI_r& \\ &I_r \end{pmatrix}$.
Hence $\omega_{\tau_i\otimes
\sigma}(\omega_0.\omega_{\tau_i\otimes\sigma}^{-1})=\omega_{\tau_i}\cdot
(\omega_\rho\chi')^{-1}$. So we can choose $\chi'$ to require
further that the characters $\omega_{\tau_i\otimes
\sigma}(\omega_0.\omega_{\tau_i\otimes\sigma}^{-1}), i=1,2$ are
ramified.
\\

Now we want to apply Shahidi's formula (*4) to $\tau_i\otimes
\sigma, i=1,2$, and to show that $C(s, \tau_1\otimes \sigma)=C(s,
\tau_2\otimes \sigma)$. For this purpose, we first choose $\bar U_0$
large enough satisfying (*4) for both $\tau_i\otimes \sigma, i=1,2$
and fix $y_0$. Then take positive integer $l$ sufficiently large so
that $N_{2r+1,l}\times N_{2r+1,l}$ contains the domain of
integration in $j_{\tilde v_i,\bar U_0}(m, y_0)$, where
$N_{2r+1,l}=N_{2r+1}\cap J_{2r+1,l}$ as in section 3. Now choose
$W_{\tilde v_i}\begin{pmatrix} g_1&\\ &g_2
\end{pmatrix}=W^i_{v_l}(g_1)W'(g_2)$, where $W'$ is a Whittaker
function of $\sigma$ with $W'(I_{2r+1})=1$.   \\

So with this $W_{\tilde v_i}$, and plug the integral defining
$j_{\tilde v_i, \bar U_0}$ into the formula (*4) for $C(s,
\tau_i\otimes \sigma)$. We get formula
\[
C(s, \tau_i\otimes \sigma)^{-1}=\gamma(2<\tilde
\alpha,\alpha^\vee>s,\omega_{\tau_i\otimes
\sigma}(\omega_0.\omega_{\tau_i\otimes \sigma}^{-1}),\psi
)^{-1}\times
\]
\[
\int_{Z_M^0N_M\backslash U} \int_{N_{M,n}\backslash N_M} W_{\tilde
v_i}(\dot{m}u^{-1})\phi(zu\bar
nu^{-1}z^{-1})\psi(u)du\omega_{(\tau_i\otimes
\sigma)_s}^{-1}(\dot{\chi}_\alpha)
(\omega_{0}.\omega_{(\tau_i\otimes
\sigma)_s})(\dot{\chi}_\alpha)q^{(s\tilde \alpha+\rho,
H_M(\dot{m}))}d\dot{n}.
\]
Note that $\dot{m}$ has particular form
\[
\begin{pmatrix} \omega_{2r+1}a& \\ &\omega_{2r+1}b  \end{pmatrix}
\]
with diagonal matrices $a,b$, and the integration with $u$ is over
$N_{2r+1,l}\times N_{2r+1,l}$. By (10) and the definition of Howe
vectors $W^i_l$, we have
\[
W^1_l(\omega_{2r+1}au)=W^2_l(\omega_{2r+1}au)
\]
for all diagonal matrices $a$ and $u\in N_{2r+1,l}$. This then
implies that
\[
W_{\tilde v_1}(\dot{m}u^{-1})=W_{\tilde v_2}(\dot{m}u^{-1}),
\]
which means $C(s, \tau_1\otimes \sigma)=C(s, \tau_2\otimes \sigma)$.
\\

By the relation between local coefficient $C(s,\tau_i\otimes
\sigma)$ and gamma factors $\gamma(s,\tau_i\otimes \sigma,\psi)$ and
their multiplicativities, it follows that
\[
\gamma(s,\tau_1\otimes \rho,\psi)=\gamma(s,\tau_2\otimes \rho,\psi).
\]
By the $(2r+1,2r)$-local converse theorem in \cite{Hen}, we then
conclude that $\tau_1\cong\tau_2$. Now apply Bernstein-Zelevinsky's
classification of irreducible admissible representations of $G_n$ in
terms of segments, for example Theorem 6.1 in \cite{Z}, we can
conclude that $\pi_1\cong \pi_2$, which finishes the proof.

\end{proof}

\begin{thm}
\label{thm4.5} \textbf{Conjecture 1} is true.
\begin{proof}
This follows from \textbf{Theorem \ref{thm4.3}, \ref{thm4.4}} and
the work \cite{JNS}.
\end{proof}
\end{thm}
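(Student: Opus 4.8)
The plan is to assemble \textbf{Conjecture 1} from the two parity cases already treated together with the reduction theorem of Jiang--Nien--Stevens. First, Theorem \ref{thm4.3} establishes \textbf{Conjecture 2} when $n=2r+1$ is odd, and Theorem \ref{thm4.4} establishes it when $n=2r$ is even; since every integer $n\ge 2$ is either odd or even, these together yield \textbf{Conjecture 2} in full generality: any two irreducible unitarizable supercuspidal representations $\pi_1,\pi_2$ of $GL_n(F)$ with matching twisted $\gamma$-factors $\gamma(s,\pi_1\times\rho,\psi)=\gamma(s,\pi_2\times\rho,\psi)$ for all irreducible generic $\rho$ on $G_r$ with $1\le r\le[\tfrac n2]$ are isomorphic.

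Second, I would invoke the reduction carried out in section 2.4 of \cite{JNS}. There the local converse problem for general irreducible generic smooth representations is reduced to the corresponding statement for unitarizable supercuspidals, the mechanism being the multiplicativity of $\gamma$-factors along parabolic induction (so that equality of the relevant $\gamma$-factors for the supercuspidal constituents is equivalent to equality for the representations themselves) combined with the Bernstein--Zelevinsky classification in terms of segments, which recovers the inducing data from the collection of $\gamma$-factors. Applied to a pair $\pi_1,\pi_2$ satisfying the hypotheses of \textbf{Conjecture 1}, this places one exactly in the situation covered by \textbf{Conjecture 2}, which has just been established. Combining the two inputs gives \textbf{Conjecture 1}.

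The only points requiring care are bookkeeping ones: that the range of twists $1\le r\le[\tfrac n2]$ is preserved under the \cite{JNS} reduction (induction to $GL_n$ from a proper Levi does not force twisting rank beyond $[\tfrac n2]$), and that the unitarizability hypothesis in \textbf{Conjecture 2} is harmless, since any irreducible supercuspidal becomes unitarizable after an unramified twist and such a twist does not affect the converse statement. I do not expect a genuine obstacle at this stage; the substance of the argument lies entirely in Theorems \ref{thm4.3} and \ref{thm4.4} together with the reduction of \cite{JNS}, so the proof of Theorem \ref{thm4.5} is essentially a citation of these three results.
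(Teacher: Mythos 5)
Your proposal is correct and follows exactly the paper's route: Theorems \ref{thm4.3} and \ref{thm4.4} give \textbf{Conjecture 2} for odd and even $n$ respectively, and the reduction in section 2.4 of \cite{JNS} converts this into \textbf{Conjecture 1}. The extra detail you supply about how the \cite{JNS} reduction works (multiplicativity of $\gamma$-factors, the Bernstein--Zelevinsky classification, and harmlessness of the unitarizability hypothesis) is consistent with that reference and merely makes explicit what the paper leaves as a citation.
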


\bigskip
\footnotesize \noindent\textit{Acknowledgments.} The author is
grateful to Professor James W.Cogdell for carefully reading the
draft and many helpful suggestions which improve both the
mathematics and exposition of this paper. The author also would like
to thank Professor E.M.Baruch for helpful discussions. We also thank
Baiying Liu and the anonymous referee for pointing out an error in a
previous version of this paper. Part of this work was done during
the author's visit to Morningside Center of Mathematics in January,
2015. The author would like to thank Professor Tian,Ye for his kind
invitation and the hospitality of the center. This work is supported
by the National Natural Science Foundation of China grant 11401193.

\end{document}